\title{Convergence of Proximal Policy Gradient Method for  Problems with Control Dependent Diffusion Coefficients}
\author{Ashley Davey\footnote{Department of Mathematics, Imperial College, London SW7 2BZ,\, UK. Email: ashley.davey18@imperial.ac.uk}\: and Harry Zheng\footnote{Department of Mathematics, Imperial College, London SW7 2BZ,\, UK. Email: h.zheng@imperial.ac.uk. Supported in part by the EPSRC (UK)  Grant (EP/V008331/1).}}
\date{\today}
\DeclareMathOperator*{\argmin}{arg\,min}
\providecommand{\R}{\mathbb{R}}
\providecommand{\N}{\mathbb{N}}
\providecommand{\D}{\mathcal{D}}
\providecommand{\p}{\mathbb{P}}
\providecommand{\F}{\mathcal{F}}
\providecommand{\A}{\mathcal{A}}
\providecommand{\U}{\mathcal{U}}
\providecommand{\X}{\mathcal{X}}
\providecommand{\Y}{\mathcal{Y}}
\providecommand{\B}{\mathcal{B}}
\providecommand{\LL}{\mathcal{L}}
\providecommand{\C}{\mathcal{C}}
\providecommand{\V}{\mathcal{V}}
\providecommand{\s}{\mathcal{S}}
\providecommand{\HH}{\mathcal{H}}
\providecommand{\Norm}{\mathcal{N}}
\providecommand{\Unif}{\text{\normalfont Unif}}
\providecommand{\E}{\mathbb{E}}
\providecommand{\ind}{\mathbbm{1}}
\providecommand{\half}{\ensuremath{\frac{1}{2}}}
\providecommand{\prox}{\text{\normalfont prox}}
\newcommand*{\defeq}{\mathrel{\vcenter{\baselineskip0.5ex \lineskiplimit0pt
                     \hbox{\scriptsize.}\hbox{\scriptsize.}}}%
                     =}
\newtheorem{theorem}{Theorem}[section] 
\newtheorem{lemma}[theorem]{Lemma}
\theoremstyle{plain} 
\newtheorem{remark}{Remark}
\newtheorem{assumption}{Assumption}
\newtheorem{definition}[theorem]{Definition}
\theoremstyle{definition} 
\newtheorem{example}[theorem]{Example}
\begin{document}

\date{}
\maketitle

\begin{abstract}
We prove convergence of the proximal policy gradient method for a class of constrained stochastic control problems with control in both the drift and diffusion of the state process. The problem requires either the running or terminal cost to be strongly convex, but  other terms may be non-convex. The inclusion of control-dependent diffusion introduces additional complexity in  regularity analysis of the associated backward stochastic differential equation. We provide sufficient conditions under which the control iterates converge linearly to the optimal control, by deriving representations and estimates of solutions to the adjoint backward stochastic differential equations. We introduce numerical algorithms that implement this method using deep learning and ordinary differential equation based techniques. These approaches enable high accuracy and scalability for stochastic control problems in higher dimensions. We provide numerical examples  to demonstrate the accuracy and validate the theoretical convergence guarantees of the algorithms. 
\end{abstract}

\smallskip

\noindent \textbf{Keywords:} proximal policy gradient method, stochastic control, convergence analysis,
backward stochastic differential equation, partial differential equation, deep neural network, control constraints

\smallskip

\noindent \textbf{AMS MSC 2010:} 68Q25, 93E20, 49M05, 35C05, 65C30

\section{Introduction}
Policy gradient methods (PGM) are widely used to solve continuous time stochastic control problems, particularly in reinforcement learning and optimal control. These methods iteratively adjust control policies using gradient-based updates, making them well-suited for high-dimensional problems where dynamic programming techniques may be intractable. One key challenge in applying PGM to continuous-time stochastic control is to ensure stability and convergence, particularly when constraints are present. The proximal policy gradient method (PPGM) extends PGM by incorporating regularisation, which stabilises updates and enables handling of non-smooth and control constrained  problems.

The study of PPGMs has gained significant attention recently.  \citep{reisinger2023linear} show pointwise linear convergence of the PPGM in terms of feedback control for problems with general nonlinear drift and cost functions.   Their approach relies on representing the PPGM update formula through the adjoint backward stochastic differential equation (BSDE). Establishing the regularity of the BSDE solution plays a crucial role in their convergence analysis. A key assumption in their model is that the diffusion coefficient of the state process is independent of control variable,  which simplifies the structure of the associated Hamilton-Jacobi-Bellman (HJB) equation into a semi-linear  partial differential equation (PDE).

The PPGM approach to the linear quadratic (LQ) problem has been studied in various forms, typically under unconstrained control.  \citep{fazel2018global} obtain convergence results for an LQ problem in discrete time with diffusion function independent of state and control. Motivated by the Riccati equation solution to the HJB equation, they search for linear policies, analysing a PGM-based sequence for the linear coefficient and prove convergence of the associated sequence of objective functions. \citep{hambly2023policy, wang2021global} extend this approach to mean-field games, including treatment in continuous time. In particular, \citep{wang2021global} determine convergence of a sequence of linear controls to the Nash equilibrium control. 
\citep{wang2020reinforcement} study reinforcement learning (RL) for continuous time stochastic control problems with both state and control in the diffusion, and introduce a relaxed framework in which the control policy itself is random. They show the optimal feedback control for the LQ problem is normally distributed, the mean of which is a linear function of the state. \citet{giegrich2024convergence} consider the combination of PGM and relaxed framework for unconstrained LQ problems,  where ordinary differential equation (ODE)-based solutions are used in deriving and proving convergence of a PGM-based sequence of linear coefficients for the mean of a normally distributed feedback control.

Despite these advances, the convergence of PPGM in problems with control-dependent diffusion coefficients, beyond the unconstrained LQ problem, remains largely an open question. The aim of this paper is to extend the results of \citep{reisinger2023linear} to more general control problems, specifically, problems where the control appears in the diffusion of the state process, and the running and terminal cost functions may be non-convex and may have unbounded derivatives. This additional complexity turns the HJB equation  into a fully nonlinear PDE, highly difficult to analyse and solve.

We derive sufficient conditions under which the PPGM  converges, where the state process is linear and there is at least one source of convexity in the running or terminal costs. Existing literature does not provide convergence results for this class of problems. The main difficulty arises from the complex dependence of the control process on the adjoint BSDE solutions as well as  the non-convexity of the objective functional with respect to control. The PPGM update formula in our setting depends on both elements $(Y, Z)$ of the solution to the adjoint BSDE. Establishing the regularity of $Z$ is challenging, but when the diffusion coefficient does not depend on the control, the update formula is independent of $Z$, making it unnecessary to establish its regularity as in  \citep{reisinger2023linear}. Standard approaches that rely on uniform ellipticity conditions or bounded derivatives of the value function no longer apply, requiring new techniques to establish well-posedness and convergence. We address this challenge by modifying the approach. Instead of proving pointwise convergence of iterate functions, we establish convergence in the space of square-integrable processes directly. This is achieved by exploiting the linearity of the state dynamics. Additionally, we demonstrate that allowing control in the diffusion can enhance the convergence properties of the PPGM, eliminating the need for convexity in the running cost. These considerations extend the applicability of PPGM beyond the scope of \citep{reisinger2023linear}.

We next compare our model to two related papers on iterative methods with control in the diffusion of the state process. 
\citep{kerimkulov2021modified} use successive approximations in which the Hamiltonian is directly minimised at each iteration step. In contrast, we only take a single step of gradient descent, which is numerically advantageous  as it reduces computation time by avoiding the optimisation of a potentially incorrect Hamiltonian at early iteration steps. \citep{kerimkulov2021modified} also perform convergence analysis with assumptions that running and terminal cost functions have bounded derivatives, excluding LQ problems, and obtain the convergence of the value function, whereas we establish the convergence of the control process itself. Their convergence proof uses a telescoping sum argument that ensures the objective function errors form a convergent series (hence converge to zero), circumventing the need for second-order regularity of the value function. This bypass is enabled by the exact Hamiltonian minimisation at each step. In contrast, such cancellation does not occur under PPGM, and their estimates do not carry over to our setting.
 \citep{zhou2023policy} show the convergence of the PGM for the objective function in an unconstrained minimisation problem with general state dynamics, under some strong  a priori assumptions, including that the value function is a classical solution to the HJB equation, the controlled state process stays in a bounded torus, the difference of the controls is bounded by that of the objective functions,  etc., in contrast, we only assume conditions on the model coefficients that  can be directly verified.

For implementation, we apply machine learning methods to approximate the control and BSDE processes. Compared to classical iterative methods, machine learning-based approaches provide a flexible and scalable way to approximate high-dimensional control processes, using neural networks to learn complex solution structures while maintaining computational efficiency. We use techniques similar to \citep{davey2022deep}, solving the control problem using the deep controlled second order BSDE (DC2BSDE) method that simulates the BSDE in the forward direction, driven by a deep neural network (DNN) process, where the terminal condition acts as a loss function to train the DNN parameters. The control process is also a DNN process, using the Hamiltonian (with approximate BSDE processes in place of the value function and its derivatives) as a loss function. We handle control constraints by approximating the projection onto the constraint set. We also propose an ODE-based method for the unconstrained LQ problem, similar to that in \citep{giegrich2024convergence}, and derive an explicit PGM that involves solving a series of linear ODEs  that converge to the solution of the Riccati equation.

The main contributions of this paper are as follows. We provide  
verifiable sufficient conditions under which the PPGM  converges linearly for linear controlled state processes with control dependent diffusion coefficients and general, possibly non-convex, objective functionals.
 We provide convergence analysis for this problem as it is not currently treated in the literature. The main challenge is to establish the  regularity for $(Y,Z)$ processes in the adjoint BSDE, as both appear in the update formula. Existing convergence results are lacking due to two key challenges: the absence of uniform ellipticity and the complex interaction between the control process and adjoint BSDEs. We address these difficulties by representing BSDE solutions using adjoint operators derived from the linear state dynamics (see Lemma \ref{lem_rep}). We implement an explicit ODE-based PGM for unconstrained LQ problems as an example and a DNN-based PPGM in the general setting. The method achieves high accuracy with low runtime while remaining stable across dimensions, circumventing the curse of dimensionality that limits traditional numerical schemes.

The remainder of this paper is outlined as follows. In Section \ref{sec_problem} we formulate the control problem, describe the proximal policy gradient method, and state the main result on the convergence of the PPGM to a fixed point (Theorem \ref{thm_convergence}). In Section \ref{sec_implement} we propose two numerical algorithms, one is a simplified implementation of the PPGM  for the unconstrained LQ problem involving a sequence of linear ODEs, and the other is a deep neural network implementation of the PPGM for general problems  involving two deep learning sub-algorithms approximating the BSDE solutions and corresponding control updates. In Section \ref{sec_numerics} we  present numerical examples, including nonconvex and constrained cases, showing accuracy and robustness of the algorithm. Section \ref{sec_conc} concludes.  Appendix contains all proofs.

\subsection{Notation} \label{sec_notation}

For two measure spaces $(A, \mu_A)$ and $\left(B, \mu_B\right)$ denote  $\B\left(A;B\right)$  the set of measurable functions from $A$ to $B$.
Let $\left(\Omega, \F,\left(\F_t\right)_{t \in [0,T]},\p\right)$ be a filtered probability space with fixed $T < \infty $.  Let $\left(E, |\cdot|\right)$ be a Euclidean space with inner product $\langle \cdot, \cdot \rangle_E$ and $\B_E$ the set of progressively measurable processes valued in $E$. For any $t \in [0,T]$ define 
$\s^2\left(t,T;E\right)  := \{ Y \in \B_E \colon \left\|Y\right\|_{\s^2} \defeq \E[\sup_{s \in [t, T]} |Y_s|^2]^{\frac{1}{2}} < \infty  \}$, $\HH^2\left(t,T;E\right)  := \{Z \in \B_E \colon \left\|Z \right\|_{\HH^2} \defeq \E[\int_t^T |Z_s|^2 ds]^{\frac{1}{2}} < \infty  \}$, 
and $\s^2\left(E\right)  := \s^2\left(0,T;E\right)$, $\HH^2\left(E\right) := \HH^2\left(0,T;E\right)$.
For $X, Y \in \HH^2(t, T; E)$ define 
$\langle X,Y \rangle_{\HH^2} \defeq \E[\int_t^T \langle X_s, Y_s\rangle_E ds]$.
For  $t \in [0,T]$ and $f, g \in \B( \Omega; E)$ define
$\left\langle f, g\right\rangle_{\LL^2}  \defeq \E\left[\langle f, g \rangle_E \right]$, $\left\|f\right\|_{\LL^2} = \langle f, f \rangle_{\LL^2}^{\frac{1}{2}}$,
$\LL^2(E, \F_t) = \{f \in \B(\Omega; E) \colon f \text{ is } \F_t\text{-meaurable, }\left\|f\right\|_{\LL^2}  < \infty\}$.
For a matrix $A$ and matrix valued process $B = (B_t)_{t \in [0,T]}$ define $A^\top$ as the transpose of $A$, $|A|   \defeq \sup_{|x| = 1} |Ax|$ and $\left\|B\right\|_\infty  \defeq \sup_{t \in [0,T]} |B_t|$.
For two square matrices $A, B$ with the same dimension, we say that $A \succeq B$ if the matrix $A - B$ is positive semi-definite.

\section{Problem Formulation and PPGM} \label{sec_problem}
Let $W=\left(W_t\right)_{t \in [0,T]}$ be a standard $1$-dimensional Brownian motion on the natural filtered probability space $(\Omega, \F,\left(\F_t)_{t \in [0,T]},\p\right)$ augmented with all $\p$-null sets, and $T < \infty$ a fixed finite horizon. We consider this one dimensional case for notational brevity, all results in the sequel extend to the multi-dimensional Brownian motion case. 
The state process $X^u$ satisfies a linear controlled stochastic differential equation (SDE), for $s \in [0,T]$,
\begin{align}\label{eq_state} 
dX^u_s  = \left(A_sX^u_s + B_s u_s\right)ds + \left(C_s X^u_s  + D_s u_s \right)dW_s, \  X^u_0  = x_0,
\end{align}
for some $x_0 \in \R^n$, where  $A,C \in \C^0\left([0,T];\R^{n \times n}\right)$, $B,D \in \C^0\left([0,T];\R^{n \times m}\right)$,  and $u$ is an  admissible control in the set  
$\U \defeq \{u\in  \HH^2(\R^m): u_s\in U, s\in[0,T], a.s.\}$
 for some nonempty closed convex set $U \subset \R^m$. 
  We have $X^u \in \s^2\left(\R^n\right)$ for all $u \in \U $ \citep[Theorem 3.2.2]{zhang2017backward}. The minimisation problem is the following 
 \begin{align}\label{obj}
 V \defeq \inf_{u \in \U} J(u) \defeq \E \left[ \int_0^T f_s\left(X^u_s, u_s\right)
ds  + g(X^u_T) \right], 
 \end{align}
 where  $f \colon [0,T] \times \R^n \times \R^m \to \R$ and  $g \colon \R^n \to \R$ are measurable, $f$ has the form 
 \[f_t(x, u) = f^1_t(x, u) + f^2_t(u), \] 
 and $f_t^1,f_t^2,g$ are continuously differentiable in $x, u$, their derivatives $\partial_u f^1_t, \partial_x f^1_t, \partial_u f^2_t, \triangledown g$ are Lipschitz continuous in $x, u$,  uniformly in $t$, and $\left(\partial_u f_t(0, 0)\right)_{t \in [0,T]} $ is square integrable. We write $f^1$ instead of $f^1_t$ and other functions if no confusion is caused in the paper.


In the following assumption we introduce a parameter $\mu > 0$ as a measure of the strength of convexity of the objective function. We will establish the convergence of PPGM  when  $\mu$ is sufficiently large, see Theorems \ref{thm_diff} and \ref{thm_convergence}. We consider two cases, where the strong convexity is in the running cost  (standard case) and in the terminal cost  (singular case). 
Recall that a function $h$ is $\mu$-strongly convex if the function $z \mapsto h(z) - \half \mu |z|^2$ is convex. 

\begin{assumption} \label{ass_strong}
There exists a sufficiently large $\mu > 0$ such that one of the following holds.
\begin{enumerate}
\item Standard case:  $f^2$ is $\mu$-strongly convex in $u$, uniformly in $t$. 
\item Singular case: $g$ is $\mu$-strongly convex. Furthermore,
define, for $t \in [0,T]$,
$\A_t  = A_t + A_t^\top + C_t^\top C_t$, $\B_t  = B_t + C_t^\top D_t$, and  $\D_t  = D_t^\top D_t$.
Then one of the following holds.
\begin{enumerate}[i)]
\item $\A_t$ is positive definite for all $t$, and there exists $\delta > 0$ such that 
$\D_t - \B_t^\top \A^{-1}_t \B_t \succeq \delta \ind_m. $
\item $\A_t$ is positive semi-definite for all $t$,  there exists $\delta > 0$ such that 
$ \D_t - \B_t^\top \B_t \succeq \delta \ind_m$, and 
$T$ is sufficiently small (the condition on $T$ is not required if $\B_t=0$ for all $t$). 
\end{enumerate}
\end{enumerate}
\end{assumption}



We use the following result in establishing the convergence of the PPGM   to a stationary point.

\begin{theorem}[Stationary point characterisation, \citep{reisinger2023linear} Theorem 3.10] \label{thm_stationary} Let $\left(X, \|\cdot\|_X\right)$ be a Hilbert space, $F \colon X \to \R$ be a differentiable function and $\tilde{F} \colon X \to \R \cup \{\infty\}$ be a proper, lower semicontinuous and convex function. Let $x^* \in X$ such that $\tilde{F}\left(x^*\right) < \infty$. Then $x^*$ is a stationary point of $F+\tilde{F}$, that is,
$$\liminf_{x \to x^*} \frac{(F+\tilde{F})(x) - (F+\tilde{F})(x^*)}{\|x-x^*\|_X} \geq 0, 
$$
 if and only if there exists $\bar{\tau} > 0$ such that for all $\tau \in (0, \bar{\tau})$
\begin{align*}
x^* = \prox_{\tau \tilde{F}}\left(x^* - \tau \triangledown F\left(x^*\right)\right),
\end{align*}
where $\triangledown F$ denotes the Fréchet derivative of $F$ and, for all $x \in X$, 
\[\prox_{\tau \tilde{F}}\left(x\right) = \arg \min_{z \in  X}\left(\half \|z-x\|_X +\tau \tilde{F}\left(z\right)\right).\]
\end{theorem}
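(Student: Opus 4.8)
The plan is to reduce the fixed-point equation to the first-order optimality condition $-\triangledown F(x^*) \in \partial \tilde{F}(x^*)$, where $\partial \tilde{F}$ is the convex subdifferential of $\tilde{F}$, and then to show that this inclusion is precisely the stated $\liminf$ condition. The first step is the resolvent characterisation of the proximal map: for any $\tau > 0$ and $y \in X$, the functional $z \mapsto \half \|z - y\|_X^2 + \tau \tilde{F}(z)$ is proper, lower semicontinuous and strongly convex (its quadratic part being $1$-strongly convex), hence admits a unique minimiser $p = \prox_{\tau \tilde{F}}(y)$, characterised by the inclusion $0 \in (p - y) + \tau \partial \tilde{F}(p)$, i.e.\ $\tfrac{1}{\tau}(y - p) \in \partial \tilde{F}(p)$ (the sum rule applies since the quadratic term is everywhere Fréchet differentiable). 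Taking $y = x^* - \tau \triangledown F(x^*)$ and $p = x^*$ shows that $x^* = \prox_{\tau \tilde{F}}(x^* - \tau \triangledown F(x^*))$ holds for some $\tau > 0$ if and only if $-\triangledown F(x^*) \in \partial \tilde{F}(x^*)$, and this is independent of $\tau$; in particular the quantifier ``there exists $\bar\tau > 0$ such that for all $\tau \in (0,\bar\tau)$'' is immaterial and one may take $\bar\tau = \infty$.

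For the direction ($\Leftarrow$), assume $-\triangledown F(x^*) \in \partial \tilde{F}(x^*)$. Then for every $x \in X$ we have $\tilde{F}(x) \geq \tilde{F}(x^*) - \langle \triangledown F(x^*), x - x^* \rangle_X$, while Fréchet differentiability of $F$ gives $F(x) = F(x^*) + \langle \triangledown F(x^*), x - x^* \rangle_X + o(\|x - x^*\|_X)$ as $x \to x^*$. Adding these and dividing by $\|x - x^*\|_X > 0$ yields $\tfrac{(F + \tilde{F})(x) - (F + \tilde{F})(x^*)}{\|x - x^*\|_X} \geq \tfrac{o(\|x - x^*\|_X)}{\|x - x^*\|_X}$, whose right-hand side tends to $0$, so the $\liminf$ is nonnegative and $x^*$ is a stationary point.

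For the direction ($\Rightarrow$), assume $x^*$ is stationary. Fix an arbitrary $x$ with $\tilde{F}(x) < \infty$ (the inclusion to prove is vacuous where $\tilde{F} = \infty$) and test the $\liminf$ along the segment $x_t \defeq x^* + t(x - x^*)$, $t \in (0,1]$, which converges to $x^*$. Convexity of $\tilde{F}$ gives $\tilde{F}(x_t) - \tilde{F}(x^*) \leq t(\tilde{F}(x) - \tilde{F}(x^*))$, differentiability of $F$ gives $F(x_t) - F(x^*) = t\langle \triangledown F(x^*), x - x^* \rangle_X + o(t)$, and $\|x_t - x^*\|_X = t\|x - x^*\|_X$. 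Hence the difference quotient along the segment is bounded above by $\tfrac{\langle \triangledown F(x^*), x - x^* \rangle_X + \tilde{F}(x) - \tilde{F}(x^*) + o(1)}{\|x - x^*\|_X}$, and since the stationarity hypothesis forces the $\liminf$ of the difference quotient over \emph{all} approaches to $x^*$, hence in particular along this segment, to be nonnegative, we obtain $\langle \triangledown F(x^*), x - x^* \rangle_X + \tilde{F}(x) - \tilde{F}(x^*) \geq 0$, i.e.\ $\tilde{F}(x) \geq \tilde{F}(x^*) + \langle -\triangledown F(x^*), x - x^* \rangle_X$. As $x$ was arbitrary, $-\triangledown F(x^*) \in \partial \tilde{F}(x^*)$, which by the first step is equivalent to the fixed-point equation.

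\textbf{Main obstacle.} The delicate point is the ($\Rightarrow$) direction: the hypothesis only controls the $\liminf$ over all approaches $x \to x^*$, and one must legitimately extract from it the one-sided directional information along each ray while ensuring that the normalisation by $\|x_t - x^*\|_X$ is harmless (it is, being exactly $t\|x - x^*\|_X$) and that the $o(t)$ error from the expansion of $F$ vanishes after division by $t$. Everything else is bookkeeping with the definition of the convex subdifferential and the standard resolvent identity for $\prox$.
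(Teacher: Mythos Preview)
The paper does not supply its own proof of this theorem: it is quoted verbatim from \citep{reisinger2023linear}, Theorem~3.10, and is invoked only as a black box in the proof of Theorem~\ref{thm_convergence}. There is therefore nothing in the paper to compare your argument against.

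That said, your proof is correct and is the standard convex-analytic route: reduce the fixed-point identity to the optimality inclusion $-\triangledown F(x^*) \in \partial\tilde{F}(x^*)$ via the resolvent characterisation of $\prox$, then identify this inclusion with the $\liminf$ condition using the subgradient inequality in one direction and a segment test exploiting convexity of $\tilde{F}$ in the other. One minor remark: the paper's displayed definition of $\prox_{\tau\tilde{F}}$ writes $\half\|z-x\|_X$ rather than $\half\|z-x\|_X^2$; this is evidently a typo, and you were right to work with the squared norm, without which neither strong convexity of the proximal objective nor the resolvent identity $\tfrac{1}{\tau}(y-p)\in\partial\tilde{F}(p)$ would hold as stated.
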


In the context of this paper, $X=\HH^2(\R^m)$, $F=J$ in (\ref{obj}), and $\tilde F(u)=0$ if $u_t\in U$ a.e. for $t\in [0,T]$ and $\infty$ otherwise (an infinite penalty function for violation of control constraints). To find the stationary control, we use the  Hamiltonian $H\colon [0,T] \times \R^n \times \R^m \times \R^n \times \R^{n} \to \R$, defined by
\begin{align}\label{eq_def_H}
H_t\left(x, u, y, z\right) & \defeq y^\top \left(A_tx + B_t u\right) +  z^\top \left(C_t x + D_t u\right) + f_t(x, u),
\end{align}
for $t \in [0,T]$, $u \in \R^m$, and $x, y, z \in \R^n$, where $a^\top b$ denotes the inner product of $a, b \in \R^d$ for any $d \in \N$. It can be shown (see e.g. \citep[Lemma 3.1]{acciaio2019extended}) that under the assumptions on $f$ and $g$ and their derivatives, $J$ is Fr\'{e}chet differentiable with $\triangledown J(u) \in \HH^2(\R^m)$, given by, for $t \in [0,T]$,
\[(\triangledown J(u))_t = \partial_u H_t\left({X}_t^u, u_t, {Y}_t^u, {Z}_t^u\right),\]
where $X^u$ is the solution to SDE \eqref{eq_state} and $(Y^u, Z^u)$ satisfy 
 the following adjoint BSDE, for $s \in [0,T]$,
\begin{align} \label{eq_adjoint}
dY^u_s & = -\left(A_s^\top Y^u_s + C_s^\top Z^u_s + \partial_x f_s(X^u_s, u_s)\right)ds + Z_s^u dW_s, \
Y^u_T = \triangledown g(X^u_T).
\end{align}
The HJB equation for this problem can be written as
\begin{align} \label{eq_pde}
\partial_t v(t, x) + \inf_{u \in U}   H_t\left(x, u, \partial_x v(t, x), \half \partial_{xx}v(t, x)  \left(C_t x + D_t u \right)\right)  = 0, 
\end{align}
with $v(T, x) = g(x)$. The policy gradient method (PGM) uses gradient descent with respect to the Hamiltonian. In the HJB equation the Hamiltonian takes the first and second derivatives of the value function - if it is indeed differentiable - as input. We do not know these a priori, and  instead input solutions to a BSDE, which act as estimates for these quantities. To deal with the constraint function, we introduce the proximal map $\text{prox}_{U} \colon \R^m \to \R^m$ as
\begin{align*}
\text{prox}_{U}\left(u\right) \defeq \argmin_{z \in U} \left\{\half |z-u|^2\right\}, \ u \in \R^m. 
\end{align*}
 The algorithm seeks feedback controls $\phi \colon [0,T] \times \R^n \to U$. Let $\B\left([0,T] \times \R^n ;U\right)$ be the space of such measurable functions. 
Then define
\[\V \defeq \left\{ \phi \in \B\left([0,T] \times \R^n ;\R^m\right) \colon (\phi_s(X^\phi_s))_{s \in [0,T]} \in \U  \right\},\]
where $X^\phi$ is a solution to SDE (\ref{eq_state}) with closed-loop control $u_s=\phi_s(X^\phi_s)$ for $ s \in [0,T]$.  
For some arbitrary initial guess $\phi_0$, we can define an iterative scheme as follows, for all $k \in \N$ and  $\left(t,x\right) \in [0,T] \times \R^n$.
\begin{definition}[Proximal policy gradient method]
Define the updating function $\LL \colon \B\left([0,T] \times \R^n ;U\right) \to \B\left([0,T] \times \R^n ;U\right)$ by
\begin{align} \label{eq_L}
\LL\left(\phi\right)_t\left(x\right) \defeq \prox_{U}\left(\phi_t\left(x\right) - \tau \partial_u H_t\left( x, \phi_t\left(x\right), Y^{t, x, \phi}_t, Z^{t, x, \phi}_t\right) \right), 
\end{align}
for $\phi \in \V$, $t \in [0,T]$ and $x \in \R^n$, where  $\left(X^{t, x, \phi}, \, Y^{t, x, \phi}, \, Z^{t, x, \phi}\right)$ are solutions to the following FBSDE, for $s \in [t, T]$, 
\begin{align} \begin{split}
dX^{t,x,\phi}_s & = \left(A_sX^{t,x,\phi}_s + B_s u_s\right) ds + \left(C_sX^{t,x,\phi}_s + D_s u_s\right) dW_s, \
X^{t,x,\phi}_t  = x, \\
dY^{t,x,\phi}_s & = -\left(A_s^\top Y^{t,x,\phi}_s + C_s^\top Z^{t,x,\phi}_s + \partial_x f_s(X^{t,x,\phi}_s, u_s)\right)ds + Z_s^{t,x,\phi} dW_s, \
Y^{t,x,\phi}_T = \triangledown g(X^{t, x, \phi}_T)
\end{split} \label{bsde}
\end{align}
where $u_s=\phi_s\left(X^{t,x,\phi}_s\right)$. 
Fix $\phi^0 \in \V$, then for $k \in \N$ define the PPGM scheme $\left(\phi^k\right)_{k \in \N}$ by
\begin{align}\label{update}
\phi^{k+1}_t\left(x\right) = \LL(\phi^k)_t(x),
\end{align}
for $t \in [0,T]$ and $x \in \R^n$.
\end{definition}

For $\phi \in \V$ we define the associated control $u^{\phi} \in \U$ by $u^{\phi}_t = \phi_t\left(X^\phi_t\right)$. Let $u^k \defeq u^{\phi^k}$ be the corresponding sequence of controls. By the Markov property, the sequence $(u^k)_{k \in \N}$ satisfies the relation
\begin{align} \label{update_control}
u^{k+1}_t 
& = \prox_{U}\left(u^k_t - \tau \partial_u H_t\left(X^{k}_t, u^k_t, Y^{k}_t, Z^{k}_t\right) \right),
\end{align}
where $(X^k, Y^{k}, Z^{k}) \defeq (X^{0, x_0, \phi^k}, Y^{0, x_0, \phi^k}, Z^{0, x_0, \phi^k})$ are the corresponding solutions of \eqref{bsde}.
The sequence $u^k$ is then the natural PPGM sequence for the function $J$ under the control constraint. To show convergence of the algorithm, we  focus on the iterates $u^k$, but in a practical algorithm these cannot be found directly, and we instead focus on the iterates $\phi^k$, with additional approximations by neural networks.

\begin{remark}
In \citep{reisinger2023linear} the PPGM is used for control problems without control in the diffusion, leading to an update formula of the form
\begin{align*}
u^{k+1}_t
& = \prox_{U}\left(u^k_t - \tau \partial_u H_t\left(X^{k}_t, u^k_t, Y^{k}_t\right) \right),\ k \in \N,
\end{align*}
which depends on the BSDE solution $(Y^{k}, Z^{k})$ only through $Y$. The treatment of $Y$ and its dependence on the forward process $X$ is standard in the literature, see \citep{zhang2017backward}. In our approach we also need to deal with the less regular process $Z$, leading to technical difficulties to overcome. Typically, $Z$ is determined via applying the martingale representation theorem to $Y$, in a nonconstructive manner, and its regularity is difficult to estimate. We do not directly address the regularity of $Z$, instead, we give a representation of  $\partial_u H_t\left(X^{k}_t, u^k_t, Y^{k}_t, Z^{k}_t\right) $ in terms of the adjoint operators of the associated stochastic integrals  for SDE (\ref{eq_state}). This representation is  crucial in our treatment of control in diffusion coefficients, as it yields the necessary regularity estimates that facilitate convergence analysis in the general case.
\end{remark}

We show in the next theorem that differences of control iterates at stage $k \in \N$ can be bounded by those at stage $k - 1$, yielding an iterative bound.

\begin{theorem}  \label{thm_diff}
Let Assumption  \ref{ass_strong}  hold.  Then there exists a constant $K >0$, independent of $\mu$, such that for sufficiently large $\mu>0$ and sufficiently small $\tau>0$, the sequence $(u^k)_{k \in \N}$ satisfies
\begin{align*}
& \left\|u^{k+1} - u^k\right\|_{\HH^2} \leq \left(1-\tau\left( \rho \mu - K\right)\right) \left\|u^k - u^{k-1}\right\|_{\HH^2},
\end{align*}
for all $k \in \N$, 
where $\rho = \frac{1}{2}$ in the standard case and $\rho = \frac{\delta}{2}$ in the singular case.
\end{theorem}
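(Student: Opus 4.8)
The plan is to combine the $1$-Lipschitz (nonexpansive) property of the projection $\prox_U$ with the gradient identity $(\triangledown J(u))_t=\partial_u H_t(X^u_t,u_t,Y^u_t,Z^u_t)$ and the adjoint-operator representation of Lemma \ref{lem_rep}. Since $(X^k,Y^k,Z^k)=(X^{u^k},Y^{u^k},Z^{u^k})$, we have $\partial_u H_t(X^k_t,u^k_t,Y^k_t,Z^k_t)=(\triangledown J(u^k))_t$, so \eqref{update_control} becomes $u^{k+1}_t=\prox_U\!\bigl(u^k_t-\tau(\triangledown J(u^k))_t\bigr)$. Applying this at iterations $k$ and $k-1$, using nonexpansiveness of $\prox_U$, and writing $\Delta u\defeq u^k-u^{k-1}$ and $\Delta X\defeq X^k-X^{k-1}$, we get pointwise $|u^{k+1}_t-u^k_t|\le|\Delta u_t-\tau((\triangledown J(u^k))_t-(\triangledown J(u^{k-1}))_t)|$, and squaring, integrating in $t$ and taking expectations yields
\begin{align*}
\|u^{k+1}-u^k\|_{\HH^2}^2 &\le\|\Delta u\|_{\HH^2}^2-2\tau\bigl\langle\Delta u,\triangledown J(u^k)-\triangledown J(u^{k-1})\bigr\rangle_{\HH^2} \\
&\quad+\tau^2\bigl\|\triangledown J(u^k)-\triangledown J(u^{k-1})\bigr\|_{\HH^2}^2.
\end{align*}
It then remains to bound the cross term below by $(2\rho\mu-K_0)\|\Delta u\|_{\HH^2}^2$ with $K_0$ independent of $\mu$, and the last term above by $L^2\|\Delta u\|_{\HH^2}^2$ for some $L$ possibly depending on $\mu$.

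For the cross term I would use Lemma \ref{lem_rep}, which represents the gradient through the adjoint operators of the linear dynamics; concretely, applying It\^o's formula to $\langle\Delta Y_s,\Delta X_s\rangle$ — where $\Delta X$ solves the state dynamics of \eqref{eq_state} driven by $\Delta u$ from zero initial condition, so the $A$- and $C$-terms cancel and $\Delta Y_T=\triangledown g(X^k_T)-\triangledown g(X^{k-1}_T)$ — gives
\begin{align*}
\bigl\langle\Delta u,\triangledown J(u^k)-\triangledown J(u^{k-1})\bigr\rangle_{\HH^2}
&=\E\bigl[\langle\triangledown g(X^k_T)-\triangledown g(X^{k-1}_T),\Delta X_T\rangle\bigr] \\
&\quad+\E\Bigl[\int_0^T\langle\partial_x f_s(X^k_s,u^k_s)-\partial_x f_s(X^{k-1}_s,u^{k-1}_s),\Delta X_s\rangle\,ds\Bigr] \\
&\quad+\E\Bigl[\int_0^T\langle\partial_u f_s(X^k_s,u^k_s)-\partial_u f_s(X^{k-1}_s,u^{k-1}_s),\Delta u_s\rangle\,ds\Bigr].
\end{align*}
In the standard case the $\partial_u f^2$ part of the last term is $\ge\mu\|\Delta u\|_{\HH^2}^2$ by $\mu$-strong convexity, while the $\partial_u f^1$, $\partial_x f^1$ and $\triangledown g$ contributions are bounded in modulus by $K_0\|\Delta u\|_{\HH^2}^2$ using their fixed Lipschitz constants and the linear SDE estimate $\|\Delta X\|_{\s^2}\le C\|\Delta u\|_{\HH^2}$, so the cross term is $\ge(\mu-K_0)\|\Delta u\|_{\HH^2}^2$ and $2\rho=1$. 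In the singular case $\partial_u f^2$ contributes nonnegatively by monotonicity, the $f^1$ pieces are again absorbed into $K_0\|\Delta u\|_{\HH^2}^2$, and $\mu$-strong convexity of $g$ makes the first term $\ge\mu\,\E[|\Delta X_T|^2]$; here I would invoke the singular-case hypotheses of Assumption \ref{ass_strong} — applying It\^o to $|\Delta X_s|^2$ produces exactly $\langle\Delta X_s,\A_s\Delta X_s\rangle+2\langle\Delta X_s,\B_s\Delta u_s\rangle+\langle\Delta u_s,\D_s\Delta u_s\rangle$, and completing the square under either alternative (using the smallness of $T$ when $\B\neq0$ to absorb $\E[\int_0^T|\Delta X_s|^2\,ds]\le TC\|\Delta u\|_{\HH^2}^2$) gives $\E[|\Delta X_T|^2]\ge\delta\|\Delta u\|_{\HH^2}^2$, so the cross term is $\ge(\delta\mu-K_0)\|\Delta u\|_{\HH^2}^2$ and $2\rho=\delta$.

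For the last term, standard stability estimates for the linear BSDE \eqref{eq_adjoint} give $\|\Delta Y\|_{\s^2}+\|\Delta Z\|_{\HH^2}\le C'(\|\Delta X\|_{\s^2}+\|\Delta u\|_{\HH^2})$, and since $(\triangledown J(u^k))_t-(\triangledown J(u^{k-1}))_t=B_t^\top\Delta Y_t+D_t^\top\Delta Z_t+\partial_u f_t(X^k_t,u^k_t)-\partial_u f_t(X^{k-1}_t,u^{k-1}_t)$, Lipschitz continuity of $\partial_u f$ together with $\|\Delta X\|_{\s^2}\le C\|\Delta u\|_{\HH^2}$ yields $\|\triangledown J(u^k)-\triangledown J(u^{k-1})\|_{\HH^2}\le L\|\Delta u\|_{\HH^2}$; here $C'$ and $L$ may grow with $\mu$ (through the Lipschitz constant of $\triangledown g$, or of $\partial_u f^2$), but this is harmless because $\tau$ is allowed to depend on $\mu$. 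Combining, for $\mu$ large enough that $2\rho\mu-K_0>0$ and $\tau$ small enough that $\tau L^2\le2\rho\mu-K_0$ and $\tau(2\rho\mu-K_0)\le1$, the displayed inequality collapses to $\|u^{k+1}-u^k\|_{\HH^2}^2\le(1-\tau(2\rho\mu-K_0))\|\Delta u\|_{\HH^2}^2$, and $\sqrt{1-x}\le1-x/2$ gives the claim with $K=K_0/2$. I expect the cross-term lower bound to be the main obstacle: obtaining it with $K_0$ genuinely independent of $\mu$ and without any regularity estimate for $Z$ is precisely what the adjoint representation of Lemma \ref{lem_rep} is designed to supply, and the singular case additionally requires the completing-the-square computation to reproduce the matrices $\A$, $\B$, $\D$ and to track where their positive (semi-)definiteness and the smallness of $T$ are used.
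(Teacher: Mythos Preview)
Your approach is correct and reaches the same conclusion, but it is organised differently from the paper's proof. Both arguments rest on the same key ingredient—the adjoint/It\^o duality identity of Lemma \ref{lem_rep} that expresses $\langle \Delta u,\triangledown J(u^k)-\triangledown J(u^{k-1})\rangle_{\HH^2}$ via $\triangledown g$, $\partial_x f$ and $\partial_u f$—and both reduce the singular case to the quadratic-form computation $\E[|\Delta X_T|^2]=\E\int_0^T(\Delta X^\top\A\Delta X+2\Delta X^\top\B\Delta u+\Delta u^\top\D\Delta u)\,ds$. The difference is in how the $\tau^2$ term is absorbed. You square the full nonexpansive bound and then kill $\tau^2\|\triangledown J(u^k)-\triangledown J(u^{k-1})\|_{\HH^2}^2\le \tau^2 L^2\|\Delta u\|_{\HH^2}^2$ by taking $\tau L^2\le 2\rho\mu-K_0$, explicitly allowing $L$ (hence $\tau$) to depend on $\mu$. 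The paper instead splits by the triangle inequality \emph{before} squaring, isolating the strongly convex piece ($I_4=\partial_u f^2(u^k)-\partial_u f^2(u^{k-1})$ in the standard case, $I_1=L_{0,T}^*(\triangledown g(X^k_T)-\triangledown g(X^{k-1}_T))$ in the singular case), and then invokes Nesterov's co-coercivity inequality $\langle\Delta u,I\rangle\ge\frac{\mu}{2}\|\Delta u\|^2+\frac{1}{2L}\|I\|^2$ to absorb the $\tau^2\|I\|^2$ term directly; the remaining pieces enter only linearly in $\tau$ with a $\mu$-independent constant. Your route is more elementary (it needs only the basic monotonicity bound $\langle\Delta u,\partial_u f^2(u^k)-\partial_u f^2(u^{k-1})\rangle\ge\mu\|\Delta u\|^2$, not the sharper Nesterov estimate); the paper's route makes the role of the $\mu$-independent constant $K$ more transparent by keeping the $\mu$-dependent Lipschitz constant confined to the isolated strongly convex term.

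One small slip: in the singular case you write ``$\partial_u f^2$ contributes nonnegatively by monotonicity'', but $f^2$ is not assumed convex there—only $g$ is $\mu$-strongly convex. The fix is immediate: bound $|\langle\Delta u,\partial_u f^2(u^k)-\partial_u f^2(u^{k-1})\rangle|\le L_{f^2}\|\Delta u\|_{\HH^2}^2$ and absorb $L_{f^2}$ (which is independent of $\mu$ in this case) into $K_0$, exactly as you do for the $f^1$ terms.
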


 \begin{remark} In Theorem \ref{thm_diff} and Assumption \ref{ass_strong}, we assume $\mu$ is sufficiently large, 
 that is, $\mu> K/\rho$ for some constant $K$ that is determined by model parameters but is independent of $\mu$, see proofs of  Theorems \ref{thm_diff} and \ref{thm_convergence}, which ensures $\hat{c} \defeq 1-\tau\left( \rho \mu - K\right)$ is less than 1. We also require $\tau$  sufficiently small to ensure $\hat{c} $ is greater than 0. Note that the range of $\tau$  is determined by $\mu$ as well as other model parameters. 
\end{remark}

Theorem \ref{thm_diff}
yields a bounding sequence of the form $\left\|u^{k+1} - u^k\right\|_{\HH^2} \leq \hat{c}^k \left\|u^{1} - u^0\right\|_{\HH^2} $
 for $\hat{c}\in (0,1)$, which shows the sequence $(u^k)$ is a Cauchy sequence and converges to 
 a fixed point $u^*$, that is a stationary point of the control problem. The constant $K$ in Theorem \ref{thm_diff} is dependent on $D$ so we cannot freely assume $\delta$ is arbitrarily large to ensure a contraction mapping  under the singular case. We now state the main result of the paper. 

\begin{theorem} \label{thm_convergence}
Let Assumption  \ref{ass_strong} hold.  Then for sufficiently large $\mu>0$ and sufficiently small $\tau>0$, there exists $u^*\in \U$ and $\hat{c} \in [0,1)$ such that
\begin{align}\label{eq_conv}
\left\|u^{k} - u^{*}\right\|_{\HH^2} \leq \hat{c}^k\left\|u^{0} - u^{*}\right\|_{\HH^2},
\end{align}
for all $k \in \N$. 
Furthermore, $u^{*}$ is a stationary point of $J$, satisfying the control constraint. 
\end{theorem}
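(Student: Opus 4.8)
The plan is to recognise the PPGM control iteration as the orbit of a single contraction on a complete metric space, apply the Banach fixed point theorem to get both the limit and the rate \eqref{eq_conv}, and then identify the fixed-point equation with the stationarity condition of Theorem \ref{thm_stationary}.

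First I would introduce the open-loop update map. For $u \in \U$ let $X^u$ solve \eqref{eq_state} and $(Y^u, Z^u)$ solve the adjoint BSDE \eqref{eq_adjoint}, and set $\Phi(u)_t \defeq \prox_{U}\!\left(u_t - \tau\, \partial_u H_t(X^u_t, u_t, Y^u_t, Z^u_t)\right)$. Since $\prox_U$ takes values in $U$ and is $1$-Lipschitz, and since $\left(\partial_u H_t(X^u_t,u_t,Y^u_t,Z^u_t)\right)_t = \triangledown J(u) \in \HH^2(\R^m)$, the process $\Phi(u)$ is progressively measurable, square-integrable and $U$-valued, so $\Phi \colon \U \to \U$ is well defined. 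By uniqueness of solutions to \eqref{eq_state} and \eqref{eq_adjoint}, for the closed-loop iterates the process $X^{0,x_0,\phi^k}$ solves \eqref{eq_state} driven by the open-loop control $u^k_s = \phi^k_s(X^{0,x_0,\phi^k}_s)$, hence $(X^k,Y^k,Z^k) = (X^{u^k},Y^{u^k},Z^{u^k})$, and \eqref{update_control} becomes $u^{k+1} = \Phi(u^k)$. Moreover $\U$ is a closed (convex) subset of the Hilbert space $\HH^2(\R^m)$: if $v^n \to v$ in $\HH^2$ then a subsequence converges $dt\otimes d\p$-a.e., so the constraint $v_t \in U$ (with $U$ closed) passes to the limit; thus $(\U,\|\cdot\|_{\HH^2})$ is complete.

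Next I would observe that the estimates in the proof of Theorem \ref{thm_diff} are two-point estimates: running the same argument with arbitrary $u,\bar u \in \U$ in place of $u^k, u^{k-1}$ yields $\|\Phi(u)-\Phi(\bar u)\|_{\HH^2} \le \hat{c}\,\|u-\bar u\|_{\HH^2}$ with $\hat{c} = 1 - \tau(\rho\mu - K) \in [0,1)$ for $\mu$ large and $\tau$ small. Hence $\Phi$ is a contraction on the complete space $\U$, and the Banach fixed point theorem gives a unique $u^* \in \U$ with $\Phi(u^*)=u^*$ and $\|u^k-u^*\|_{\HH^2} = \|\Phi(u^{k-1})-\Phi(u^*)\|_{\HH^2} \le \hat{c}\,\|u^{k-1}-u^*\|_{\HH^2} \le \cdots \le \hat{c}^k\|u^0-u^*\|_{\HH^2}$, which is \eqref{eq_conv}.

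For the stationarity claim I would unwind $u^*=\Phi(u^*)$. Taking $\tilde F$ to be the indicator of $\U$ in $\HH^2(\R^m)$, the functional proximal map $\prox_{\tau\tilde F}$ is the orthogonal projection onto $\U$, which decouples pointwise, so $u^* = \Phi(u^*)$ is exactly $u^* = \prox_{\tau\tilde F}\!\left(u^* - \tau\,\triangledown J(u^*)\right)$. By the variational characterisation of the projection onto a convex set, this is equivalent to $\langle \triangledown J(u^*),\, z - u^*\rangle_{\HH^2} \ge 0$ for all $z \in \U$, a condition independent of $\tau$; hence $u^* = \prox_{\tau'\tilde F}(u^* - \tau'\,\triangledown J(u^*))$ for every $\tau'>0$, and Theorem \ref{thm_stationary} with $X = \HH^2(\R^m)$, $F = J$ shows $u^*$ is a stationary point of $J+\tilde F$, i.e. of $J$ subject to the control constraint. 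The substantive analytic difficulty (absence of uniform ellipticity, dependence of the update on $Z$) has already been discharged in Theorem \ref{thm_diff} via the representation in Lemma \ref{lem_rep}; within the present argument the only delicate points are checking that the closed-loop scheme coincides with the orbit of $\Phi$, that Theorem \ref{thm_diff} upgrades to a genuine two-point contraction, and that the pointwise proximal step matches the functional proximal map of Theorem \ref{thm_stationary}.
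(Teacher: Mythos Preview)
Your proposal is correct and takes a somewhat cleaner route than the paper. The paper uses Theorem \ref{thm_diff} only as a statement about consecutive iterates: from $\|u^{k+1}-u^k\|_{\HH^2}\le\hat c\,\|u^k-u^{k-1}\|_{\HH^2}$ it concludes $(u^k)$ is Cauchy in $\HH^2(\R^m)$, hence converges to some $u^*$; then, via stability estimates for the FBSDE (so that $X^k\to X^*$, $Y^k\to Y^*$, $Z^k\to Z^*$ in $\HH^2$) and continuity of $\prox_U$ and $\partial_u H$, it passes to the limit in \eqref{update_control} to obtain $u^*=\prox_U(u^*-\tau\,\triangledown J(u^*))$, which simultaneously yields $u^*\in\U$ and, by Theorem \ref{thm_stationary}, stationarity. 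You instead observe that the proof of Theorem \ref{thm_diff} is really a two-point estimate, giving a genuine contraction for the open-loop update map $\Phi$ on the complete set $\U$, and then read everything off the Banach fixed-point theorem: $u^*\in\U$ comes for free, the rate \eqref{eq_conv} follows by iterating $\|\Phi(u^{k-1})-\Phi(u^*)\|_{\HH^2}\le\hat c\,\|u^{k-1}-u^*\|_{\HH^2}$, and the stationarity condition is precisely the fixed-point equation $u^*=\Phi(u^*)$. Your route is shorter and avoids the separate limit-passing argument; the paper's route has the minor advantage of invoking Theorem \ref{thm_diff} exactly as stated rather than appealing to its proof. Your remark that the fixed-point equation for one $\tau>0$ implies it for all $\tau>0$ via the variational inequality is a useful detail the paper leaves implicit when applying Theorem \ref{thm_stationary}.
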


\section{Implementation}\label{sec_implement}
In this section, we outline the implementation of the theoretical method using alternating numerical schemes. In these schemes, each iteration alternates between solving the BSDE (\ref{bsde}) and updating the control (\ref{update}) based on the obtained solution. For unconstrained LQ problems, this is achieved through an explicit ODE-based update rule, while for the general constrained setting, a deep learning-based method uses neural networks to approximate both the control and BSDE processes.

\subsection{Unconstrained LQ Problem}

The LQ problem has been extensively studied in the literature, see \citep{sun2016open} for exposition. 
In the unconstrained setting, we have  $U = \R^m$ and 
\begin{align}
f_s(x, u) \defeq \half x^\top Q_s x + x^\top S_s^\top  u  + \half u^\top R_s u,\quad
g(x)=x^\top G x,
\label{eq_value_lq}
\end{align}
where  coefficients $Q \in \C^0\left([0,T];\R^{n \times n}\right)$, $R \in \C^0\left([0,T];\R^{m \times m}\right)$, $S \in \C^0\left([0,T];\R^{m \times n}\right)$, $G \in \R^{n \times n}$ are deterministic, and $R$, $Q$ and $G$ are symmetric matrices. To ensure Assumption \ref{ass_strong} is satisfied, we assume there exists $\mu > 0$ such that $R \succeq \mu \ind_m$ or $G \succeq \mu \ind_n$, the latter paired with the corresponding singular condition on $A, B, C$ and $D$. 

The update formula (\ref{eq_L})  becomes
\begin{align}
\LL\left(\phi\right)_t\left(x\right) = \phi_t\left(x\right) - \tau \left(B_t^\top Y^{t,x,\phi}_t + D_t^\top Z^{t,x,\phi}_t + R_t \phi_t\left(x\right) + S_t x \right), 
\label{eq_L_u}
\end{align}
for $\phi \in \V$, $t \in [0,T]$ and $x \in \R^n$, where  $\left(X^{t, x, \phi}, \, Y^{t, x, \phi}, \, Z^{t, x, \phi}\right) $ are solutions to the FBSDE (\ref{bsde}). 
\begin{theorem} \label{thm_linear}
Define a function $\phi \in \V$ by
$\phi_t\left(x\right) \defeq \alpha_tx $
for $\alpha \in \C^0\left([0,T];\R^{m \times n}\right)$. Then there exists $\tilde{\alpha} \in \C^0\left([0,T];\R^{m \times n}\right)$ and $a \in \C^0\left([0,T];\R^{n \times n}\right)$ such that
$\LL\left(\phi\right)_t\left(x\right) = \tilde{\alpha}_tx$, $Y^{t,x,\phi}_t = a_t x$ and $Z^{t,x,\phi^k}_t  = a_t \left(C_t + D_t \tilde{\alpha}_t\right) x$
for all $t \in [0,T]$ and $x \in \R^n$, and $a$ solves the following first order linear inhomogeneous ODE
\begin{align} \label{update_a}
\dot{a}_s + a_s \left(A_s + B_s \alpha_s\right) + A_s^\top a_s + C_s^\top a_s \left(C_s + D_s \alpha_s\right) +  Q_s + S_s^\top \alpha_s & = 0,
\end{align}
for $s \in [t,T]$ with $a_T = G$.
\end{theorem}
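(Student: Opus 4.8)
### Proof Proposal for Theorem \ref{thm_linear}

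The plan is to make the linear ansatz $Y^{t,x,\phi}_s = a_s X^{t,x,\phi}_s$ for some deterministic $\C^0$ matrix function $a$ and verify it is consistent with the adjoint BSDE \eqref{bsde}, which then determines $a$ as the solution of an ODE. First I would assume $Y_s = a_s X_s$ with $a$ continuously differentiable (to be justified a posteriori by existence for the linear ODE \eqref{update_a}) and compute $dY_s$ by Itô's formula applied to the product $a_s X^{t,x,\phi}_s$, using the state dynamics with closed-loop control $u_s = \alpha_s X_s$, i.e. $dX_s = (A_s + B_s\alpha_s)X_s\,ds + (C_s + D_s\alpha_s)X_s\,dW_s$. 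This gives $dY_s = \big(\dot a_s X_s + a_s(A_s + B_s\alpha_s)X_s\big)ds + a_s(C_s + D_s\alpha_s)X_s\,dW_s$. Matching the diffusion term against $Z_s\,dW_s$ in \eqref{bsde} immediately forces $Z^{t,x,\phi}_s = a_s(C_s + D_s\alpha_s)X_s$; note the statement writes $C_t + D_t\tilde\alpha_t$ rather than $C_t + D_t\alpha_t$, so I would need to check at the final time $t$ that, since $\LL(\phi)_t(x) = \tilde\alpha_t x$ and the drift/diffusion coefficients are evaluated at the running control, the representation at time $s=t$ reads $Z_t = a_t(C_t + D_t\alpha_t)x$, and reconcile the two by observing the relation \eqref{eq_L_u} linking $\tilde\alpha$, $a$, $\alpha$ at the single point $s=t$ — I expect this is the one place requiring care about the $\tilde\alpha$ versus $\alpha$ distinction.

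Next I would match the drift terms. The drift of $Y$ from \eqref{bsde} is $-\big(A_s^\top Y_s + C_s^\top Z_s + \partial_x f_s(X_s, u_s)\big)$, and with $\partial_x f_s(x,u) = Q_s x + S_s^\top u$ (from \eqref{eq_value_lq}) evaluated at $u_s = \alpha_s X_s$ this equals $-\big(A_s^\top a_s + C_s^\top a_s(C_s + D_s\alpha_s) + Q_s + S_s^\top\alpha_s\big)X_s$. Equating with the $ds$ coefficient $\dot a_s X_s + a_s(A_s + B_s\alpha_s)X_s$ and using that this must hold for all realizations of $X_s$ (in particular $X_s$ spans $\R^n$ as the initial condition $x$ varies, or by a density argument) yields exactly the ODE \eqref{update_a}. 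The terminal condition $Y_T = \triangledown g(X_T) = 2G X_T$ — wait, with $g(x) = x^\top G x$ one has $\triangledown g(x) = 2Gx$, so strictly the ansatz gives $a_T = 2G$; I would either note the factor-of-two convention or assume $g(x) = \half x^\top G x$ is intended, and in any case the terminal condition is $a_T = G$ (up to the convention) as stated.

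Then I would establish well-posedness: \eqref{update_a} is a first-order linear inhomogeneous matrix ODE with continuous coefficients on $[0,T]$, so by standard ODE theory it has a unique global $\C^1$ (hence $\C^0$) solution $a$ on $[0,T]$ with $a_T = G$. With this $a$ in hand, define $\tilde Y_s \defeq a_s X^{t,x,\phi}_s$ and $\tilde Z_s \defeq a_s(C_s + D_s\alpha_s)X^{t,x,\phi}_s$; reversing the Itô computation shows $(\tilde Y, \tilde Z)$ solves the adjoint BSDE in \eqref{bsde} with the correct terminal condition, and since $X^{t,x,\phi} \in \s^2$ and $a \in \C^0$ these processes lie in $\s^2 \times \HH^2$, so by uniqueness of the FBSDE solution $(Y^{t,x,\phi}, Z^{t,x,\phi}) = (\tilde Y, \tilde Z)$, giving $Y^{t,x,\phi}_t = a_t x$ and the claimed $Z$ representation. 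Finally, substituting $\phi_t(x) = \alpha_t x$, $Y^{t,x,\phi}_t = a_t x$, $Z^{t,x,\phi}_t = a_t(C_t + D_t\tilde\alpha_t)x$ into the update formula \eqref{eq_L_u} gives $\LL(\phi)_t(x) = \big(\alpha_t - \tau(B_t^\top a_t + D_t^\top a_t(C_t + D_t\tilde\alpha_t) + R_t\alpha_t + S_t)\big)x$, which is linear in $x$; this is an implicit equation for $\tilde\alpha_t$, but it is affine in $\tilde\alpha_t$, so $\big(\ind_m + \tau D_t^\top a_t D_t\big)\tilde\alpha_t = \alpha_t - \tau(B_t^\top a_t + D_t^\top a_t C_t + R_t\alpha_t + S_t)$, and for $\tau$ small enough the matrix $\ind_m + \tau D_t^\top a_t D_t$ is invertible uniformly in $t$ (indeed $D_t^\top a_t D_t$ is bounded on $[0,T]$), so $\tilde\alpha_t$ is well-defined, continuous in $t$, and $\LL(\phi)_t(x) = \tilde\alpha_t x$. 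The main obstacle I anticipate is keeping the bookkeeping straight between $\alpha$ (the control defining the forward dynamics) and $\tilde\alpha$ (the updated control appearing in the $Z$-representation), and confirming that the implicit relation for $\tilde\alpha_t$ is genuinely solvable — which reduces to the small-$\tau$ invertibility just described.
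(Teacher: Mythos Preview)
Your approach is correct and essentially the same as the paper's: the paper proposes the linear ansatz (phrased there as $v^{t,\phi}_s(\tilde x)=a_s\tilde x$ and substituted into the HJB equation, which is exactly your It\^o computation on $a_sX_s$), derives the ODE \eqref{update_a} by matching coefficients, and reads off $Y^{t,x,\phi}_t=a_tx$ and $Z^{t,x,\phi}_t=a_t(C_t+D_t\alpha_t)x$.

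One point where you overcomplicated matters: the appearance of $\tilde\alpha$ in the $Z$ expression in the theorem statement is a typo. The paper's own proof concludes with $Z^{t,x,\phi}_t=a_t(C_t+D_t\alpha_t)x$ (with the \emph{current} coefficient $\alpha$, not the updated $\tilde\alpha$), and the subsequent display \eqref{eq_notation} and the update rule \eqref{update_alpha} confirm this. With $\alpha$ in place of $\tilde\alpha$, substituting into \eqref{eq_L_u} gives $\tilde\alpha_t$ \emph{explicitly} as
\[
\tilde\alpha_t=\alpha_t-\tau\bigl(B_t^\top a_t+D_t^\top a_t(C_t+D_t\alpha_t)+R_t\alpha_t+S_t\bigr),
\]
so there is no implicit equation to solve and no need for the small-$\tau$ invertibility argument you set up. Your observation about the factor of two in the terminal condition (since $g(x)=x^\top Gx$ gives $\nabla g(x)=2Gx$) is also a genuine inconsistency in the paper's conventions, not a gap in your reasoning.
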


Note that the ODE governing $a$ is independent of the choice of $t$, so can be extended to solutions on the entire horizon $[0,T]$. We have shown that the update (\ref{eq_L_u}) maps $\V_L \defeq \{\phi \in \V \colon \phi \text{ linear}\}$ to itself, and can be written as the following, for $t \in [0,T]$, $x \in \R^n$ and $k \in \N$,
\begin{align}\begin{split}
\phi^k_t\left(x\right) & = \alpha^k_t x, \qquad
Y^{t,x, \phi^k}_t = a_t^k x , \qquad
Z^{t,x,\phi^k}_t  = a^k_t \left(C_t + D_t \alpha^k_t\right) x ,
\end{split}\label{eq_notation} 
\end{align}
where $a^k \in \C^1\left([0,T];\R^{n \times n}\right)$ solves (\ref{update_a}) with $\alpha = \alpha^k$, and $\alpha^k \in \C^0\left([0,T];\R^{m \times n}\right)$ satisfies
\begin{align}
\alpha^{k+1}_t & =  \alpha^k_t    - \tau \left(B_t^\top a_t^k   + D_t^\top a^k_t   \left(C_t + D_t \alpha^k_t   \right) + R_t \alpha^k_t     + S_t\right).
\label{update_alpha}
\end{align}
Given some initialised $\alpha^0 \in \C^0\left([0,T]; \R^{m \times n}\right)$  we can implement (\ref{update_alpha}) to get successive controls $\phi^k$, $k \in \N$. 
We solve the ODE (\ref{update_a}) using an ODE solver in Python. We discretise the time space $[0,T]$ into a partition $\left(t_i\right)_{i=0}^N$ for some $N \in \N$ and output the optimal control evaluated at these time points. In the implementation we  take $t_i = i\frac{T}{N}$. For the ODE solver, we need a function to evaluate over the entire interval, not just discrete points, so we input a piecewise constant approximation of $\alpha^k$. 
The algorithm  performs this update until a convergence criterion is reached. This is either a maximum number of iterations, or if the algorithm does not improve enough over a single iteration. We deem the algorithm to converge if the relative difference of $\alpha$ between iterations is sufficiently small, given for $ k \in \N$ as
$
\Delta_k \defeq {\|\alpha^k - \alpha^{k-1}\|_\infty } / {\|\alpha^{k-1}\|_\infty}. 
$

\begin{algorithm}[H] \label{alg_PPGM_uncon} 
\SetAlgoLined
\KwResult{Optimal feedback control}
 Initialise $k = 0$ and tolerance $\epsilon> 0$\;
 \For{$i = 0,1 ,2 ,\ldots, N-1$}{ 
 Initialise $\alpha^0_i \sim \text{Unif}\left([-0.1,0.1]^{m \times n}\right)$
 }
\While{$\alpha^k$ \text{\normalfont not converged} ($\Delta_k \geq \epsilon$)}{
	Solve ODEs (\ref{update_a}) with $\alpha = \alpha^k$ to output $a^k_i = a^k_{t_i}$ for $i = 0,1 ,2 ,\ldots, N-1$\;
	Generate $\alpha^{k+1}_i$ using (\ref{update_alpha}) with $t = t_i$ for $i = 0,1 ,2 ,\ldots, N-1$\;
	Set $k = k+1$\;
	}
 Output control $x \mapsto \alpha^k_i x$ for $i = 0,1 ,2 ,\ldots, N-1$\;
  \caption{Unconstrained Policy Gradient Method for LQ Problem (LQ-PGM)}
\end{algorithm}

\subsection{General Algorithm}

We consider the numerical application of the PPGM scheme (\ref{update}). We cannot directly implement the method, as that would require exact solutions of BSDEs, which are unavailable.  Instead, we use a variant of the DC2BSDE method \citep{davey2022deep} to use approximating functions at each stage. In the unconstrained setting, our approach is equivalent to the DC2BSDE method without value function approximation. With constraints, we use a projection term, rather than the penalty function used in \citep{davey2022deep}. The value function is then approximated by running Monte Carlo simulations.

To facilitate numerical algorithms, we use a finite dimensional subspace to determine the feedback control $\phi$ and function $(t, x) \mapsto (Y^{0, x, \phi}_0, Z^{0, x, \phi}_t)$. We  use neural networks due to their high approximation capabilities \citep{leshno1993multilayer}.  The approximate function has the form
$S(t, x; \theta) = \Psi_3 \circ h \circ \Psi_2 \circ h \circ \Psi_1 (t,x)$, 
 where  $\Psi_i(z) = A_i z + b_i$  is a linear ``layer'' of some dimension $d_i$ with $d_0 = 1 + d, d_3 = k$ and $d_1, d_2$ some predetermined number of ``hidden nodes'', 
 and $h$ is some element-wise non-polynomial activation function, e.g. $h(z) = \tanh(z)$. In this case the parameter vector $\theta = (A_{i}, b_i)_{i = 1}^3 \in \R^\rho$ has dimension $\rho = \sum_{i=1}^3 d_i (d_{i-1} + 1)$. We fix an architecture of 2 hidden layers, but this can be naturally extended.

\subsubsection{Deep Learning for Solving BSDEs}

For fixed $\phi \colon [0,T] \times \R^n \to \R^m$ we can use step 1 of the DC2BSDE algorithm \citep{davey2022deep} to determine pathwise solutions $(X^j_i, Y^j_i, Z^j_i)$, $i = 1, \ldots, N, j = 1, \ldots M$, 
for some time discretisation ($0 = t_0 < \ldots < t_N = T$) of size $N$ and batch size $M$. To do this, we define a neural network $Z(t, x; \theta)$ and a start network $y(x; \kappa)$ corresponding to $Y_0$, depending on some parameters $\theta$ and $\kappa$. We start $X$ randomly in some set $\X \subset \R^n$, typically a Cartesian product of intervals. Define a loss function
\[\LL^\phi(\theta, \kappa) = \frac{1}{M}\sum_{j = 1}^M \left| Y^j_N - \triangledown g(X^j_N)\right|^2, \]
where, for $j = 1, \ldots, M$ and $i = 0, \ldots, N - 1$, $X^j_0  = x^j$, 
$Y^j_0  = y(x^j; \kappa)$, and 
\begin{align*}
X^j_{i+1} & = X^j_{i} + \left(A_i X^j_{i} + B_i \phi\left(t_i, X^j_{i}\right)\right) (t_{i+1} - t_i) + \left(C_i X^j_{i} + D_i \phi\left(t_i, X^j_{i}\right)\right) \Delta W^j_i, \\
Y^j_{i+1} & = Y^j_{i} -\left(A_i^\top Y^j_{i+1} + C_i^\top Z(t_i, X^j_{i}; \theta) + \partial_x f_i\left(X^j_{i}, \phi\left({t_i}, X^j_{i}\right)\right)\right)(t_{i+1} - t_i) + Z(t_i, X^j_{i}; \theta)\Delta W^j_i,
\end{align*}
where we use the shorthand $A_i = A_{t_i}$, and we generate $x^j \sim \Unif(\X)$ and $\Delta W^j_i \sim \Norm(0,t_{i+1} - t_i)$. 
The DBSDE algorithm is then, for $l \in \N$
\begin{align} \label{eq_update_dbsde} \begin{split}
\kappa^{l+1} & = \kappa^l - \delta \partial_\kappa \LL^\phi(\theta^l, \kappa^l), \\
\theta^{l+1} & = \theta^l - \delta \partial_\theta \LL^\phi(\theta^l, \kappa^l), \end{split}
\end{align}
where the operation for $\theta$ and $\kappa$ are element-wise.

\subsubsection{Deep Learning for Control Iteration}
Now suppose we have a control $\phi^k \colon [0,T] \times \R^n \to \R^m$ of neural network form $\phi^k(t, x) = \phi(t, x; \chi^k)$ for some parameter set $\chi^k$. Assuming convergence of the BSDE algorithm, We can find corresponding $\kappa^k \defeq \kappa^{\phi^k}$ and $\theta^k \defeq \theta^{\phi^k}$, and outputted simulation points $\left(X^j_i, Y^j_i, Z\left(t_i, X_i^j; \theta^k\right)\right) $. We want to update
\begin{align*}
\tilde{\phi}_{i}^{k+1, j} \defeq \prox_{U}\left(\phi^k\left(t_i, X^j_i\right) - \tau \left(B_{i}^\top Y^j_i + D_{i}^\top Z\left(t_i, X_i^j; \theta^k\right) + \partial_u f_{i}\left(X^j_i, \phi^k\left(t_i, X^j_i\right)\right)\right)\right).
\end{align*}
However this does not give us a function to use in the next stage of simulation. We therefore need to find a functional approximation of $\tilde{\phi}_{i}^{k+1, j}$ given input data $\left(t_i, X^j_i\right)$ for each $i, j$.  We again use a DNN for this, defining $\phi\left(t, x; \nu \right)$ as a neural network for some parameter set $\nu$.
Define the second loss function
\begin{align*}
\mathcal{G}(\nu, \chi^k) & = \frac{1}{BN}\sum_{j = 1}^M \sum_{i = 0}^{N-1} \bigg| \phi\left(t_i, X^j_i; \nu\right)- \tilde{\phi}_{i}^{k+1, j}\bigg|^2 .
\end{align*}
For some $\rho > 0$ apply the gradient descent algorithm
\begin{align} \label{eq_update_ppgm}
\nu^{l+1} & = \nu^l - \rho \partial_{\nu} \mathcal{G}\left(\nu^l, \chi^k\right),
\end{align}
until convergence to some $\nu^*$. We then denote $\chi^{k+1} = \nu^*$, and use this to define $\phi^{k+1}$, and repeat the process. We can repeat this algorithm until a tolerance is reached. Let $u^k$ be the control associated to $\phi(\cdot, \cdot; \chi^k)$. Based on Theorem \ref{thm_convergence}, define
$\Delta_k = {\|u^{k} - u^{k-1}\|_{\HH^2}}/{\|u^{k-1}\|_{\HH^2}}. $
In practise, we initialise the neural networks at each iteration by the outputted networks from the previous iteration, as these would pose as the ``best initial guess'' for the neural network optimisation as the learning rates $\tau, \delta, \rho$ are small. In the unconstrained case, this algorithm reduces to the DC2BSDE algorithm of \citep{davey2022deep}. Note that this algorithm will introduce errors that are not accounted for in our analysis, due to the approximations of BSDE solutions and the corresponding control function. 

\begin{algorithm}[H] \label{alg_PPGM_deep} 
\SetAlgoLined
\KwResult{Optimal feedback control}
 Initialise $k = 0$ and tolerances $\epsilon_1, \epsilon_2, \epsilon_3 > 0$ or max substep counts $k_1, l_2, l_3 \in \N$\;
 Initialise $\hat{\chi}^k, \hat{\theta}^k, \hat{\kappa}^k$ parameters each normally distributed with mean 0 and a small variance\;
\While{$\hat{\chi}^k$ \text{\normalfont not converged} ($\Delta_k \geq \epsilon_1$ or $k < k_1$)}{
Set $l = 0$\;
Initialise $\theta^l = \hat{\theta}^k, \kappa^l = \hat{\kappa}^k$\;
\While{$\theta^l, \kappa^l$ \text{\normalfont not converged} ($\LL^{\phi^k}(\theta^l, \kappa^l) \geq \epsilon_2$ or $l < l_2$)}{
Compute $\theta^{l+1}, \kappa^{l+1}$ using (\ref{eq_update_dbsde})\;
Set $l = l+1$\;
}
Set $\hat{\theta}^{k+1} = \theta^l, \hat{\kappa}^{k+1} = \kappa^l$\;
Set $l = 0$\;
Initialise $\nu^l = \hat{\chi}^k$\;
\While{$\nu^l$ \text{\normalfont not converged} ($\mathcal{G}(\nu^l, \hat{\chi}^k) \geq \epsilon_3$ or $l < l_3$)}{
Compute $\nu^{l+1}$ using (\ref{eq_update_ppgm})\;
Set $l = l+1$\;
}
Set $\hat{\chi}^{k+1} = \nu^l$\;
Set $k = k+1$\;
}
 Output control $\phi^k(\cdot; \hat{\chi}^k)$\;
  \caption{Proximal Policy Gradient Method (PPGM)}
\end{algorithm}

\section{Numerics} \label{sec_numerics}

In this section we apply the PPGM algorithms  to some numerical examples. We run the methods for a maximum of $k_1=200$ steps, with $\tau$ sufficiently small as in the proof of Theorem \ref{thm_diff} on a problem-by-problem basis. The LQ-PGM and PPGM algorithms also stop if the respective convergence criteria satisfy $\Delta_k \geq 10^{-6}$ and $\Delta_k \geq 10^{-5}$ respectively. The DNN sub-steps are run for $l_2 = l_3 = 100$ iterations, with learning rate $\rho = \delta = 0.01$, batch size $M = 50$ and number of time steps $N = 10$. The neural networks have 2 hidden layers of size $d_1 = d_2 = 10$, with $\tanh$ activation function, and the first layer is normalised. They are implemented within \texttt{tensorflow} with default initialisation for parameters. We generate sample points over $\X = [-10, 10]$, and when evaluating $\HH^2$ error given a control function $\phi$, we simulate $X^\phi$ using $100$ time steps and a sample size of $10000$. The code used to generate these numerics is available at \url{https://github.com/Ashley-Davey/PPGM}.

\subsection{LQ Problems}

We consider first the LQ problem (\ref{eq_value_lq}) with varying dimensions and control constraints, with various impacts on the convexity of the problem. Data (coefficients) are generated randomly. 
 The value function and optimal control are given by $v(t, x) = x^\top a^*_t x$ and $\phi^*_t(x) = \alpha^*_t x$, for some coefficients found by the Riccati equation, see e.g. \citep{sun2016open}.

\subsubsection{Convex problem with standard assumptions}

Data used are $n = 2, m = 3$, $T = 1$,   $S=0$, $G=\ind_2$ and 
$$A  = \begin{pmatrix}
0.041 & 0.11 \\
-0.25 & 0.099
\end{pmatrix}, \
B  = \begin{pmatrix}
-0.177 & -0.204 & -0.157 \\
0.077 & 0.052 &  0.019
\end{pmatrix}, \
C  = \begin{pmatrix}
0.04 & 0.093 \\
-0.148 & 0.189
\end{pmatrix},
$$
$$ 
D  = \begin{pmatrix}
-0.236 & 0.085 & 0.041 \\
0.029 & -0.180 & -0.151 
\end{pmatrix}, \
Q= \begin{pmatrix}
0.2 & 0.2  \\
0.2 & 0.2 
\end{pmatrix},\
R=\begin{pmatrix}
 1.217 & 0.019 & -0.236 \\
 0.019 & 0.809 & 0.086 \\
 -0.236 & 0.086 & 1.264
\end{pmatrix}.
$$
This problem is convex  and satisfies the standard assumptions with $\mu = 0.79$.  Figure \ref{fig_ex1_h2}  (a) shows the $\HH^2$ error decreasing as iteration steps increase. The general PPGM is less accurate than the ODE-based LQ-PGM method due to the additional neural network approximation error.  Figure \ref{fig_ex1_h2}  (b) shows the losses associated with the PPGM algorithm, which decrease with iteration step, indicating the algorithm is improving at every step. Figure \ref{fig_ex1_h2}  (c) displays the value function in terms of (the first dimension of) $x$ at time 0. Even though the $\HH^2$ accuracy is significantly different between the two algorithms, they both recover the linear control and quadratic value function  well.

\begin{figure}[H]
\centering
\begin{minipage}{.33\textwidth}
\centering
\begin{subfigure}[b]{\textwidth}
\includegraphics[width=\textwidth]{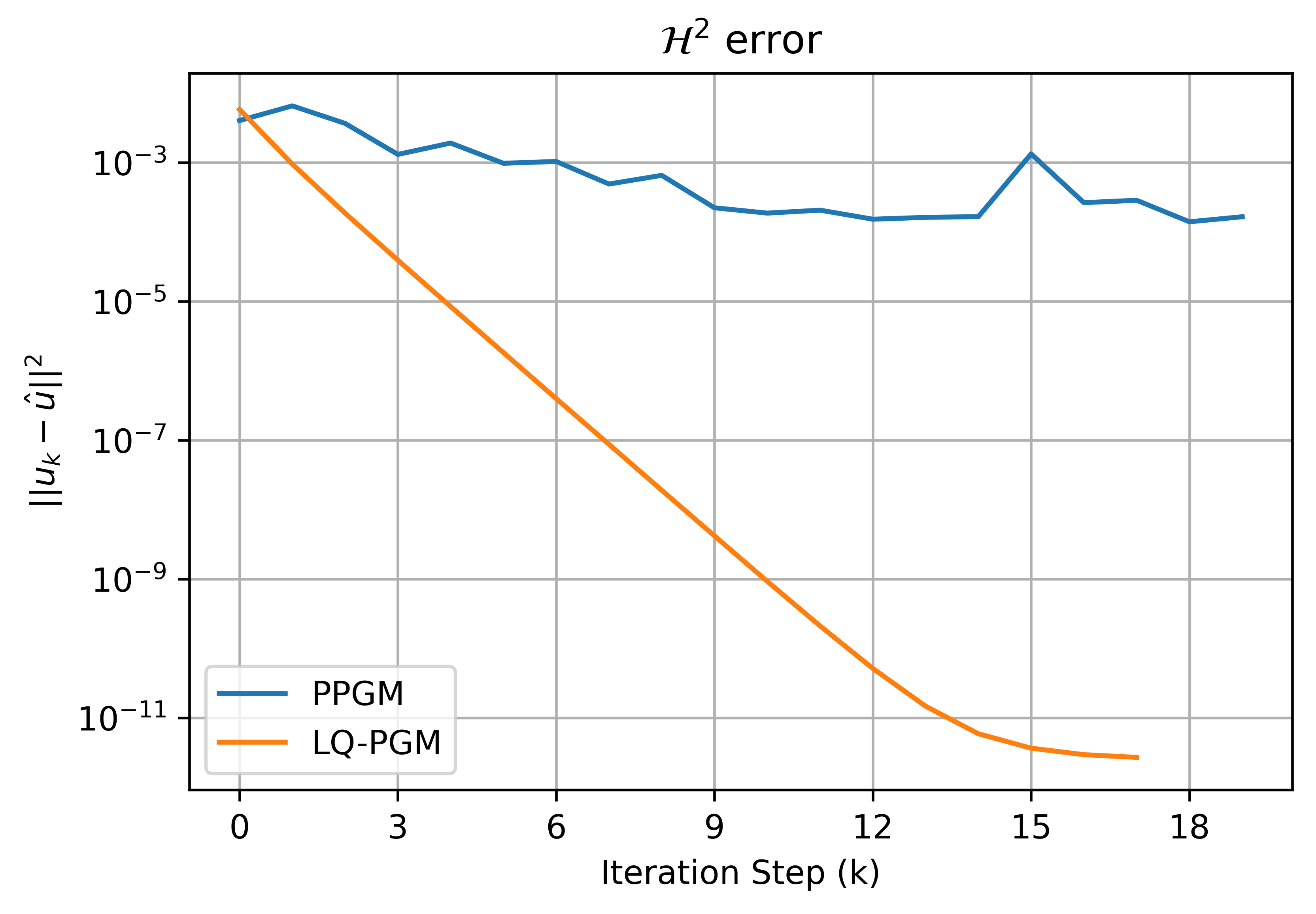}
\caption{Control error.}
\end{subfigure}
\end{minipage}%
\begin{minipage}{.33\textwidth}
\centering
\centering
\begin{subfigure}[b]{\textwidth}
\includegraphics[width=\textwidth]{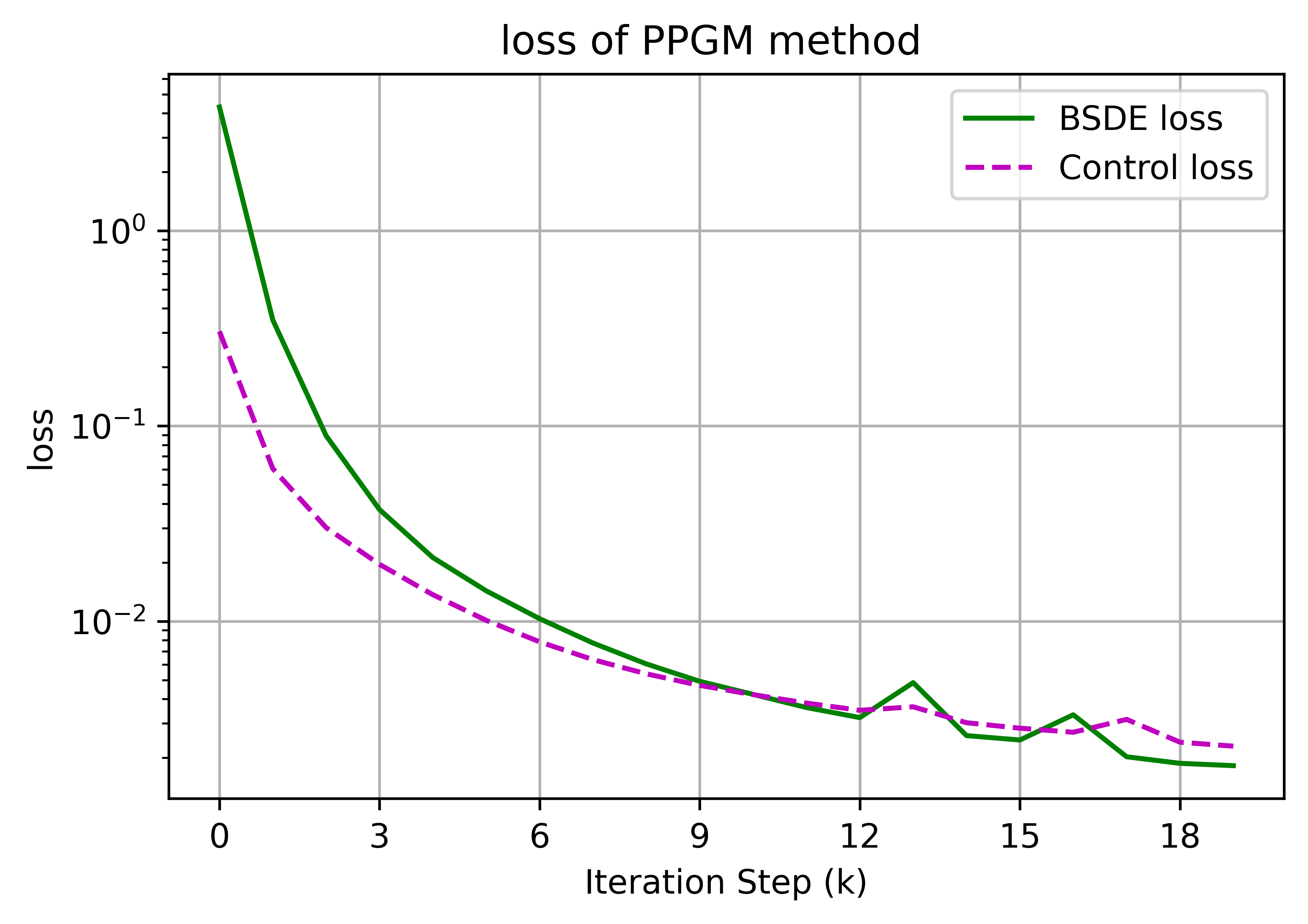}
\caption{Loss functions for PPGM}
\end{subfigure}
\end{minipage}
\begin{minipage}{.33\textwidth}
\centering
\centering
\begin{subfigure}[b]{\textwidth}
\includegraphics[width=\textwidth]{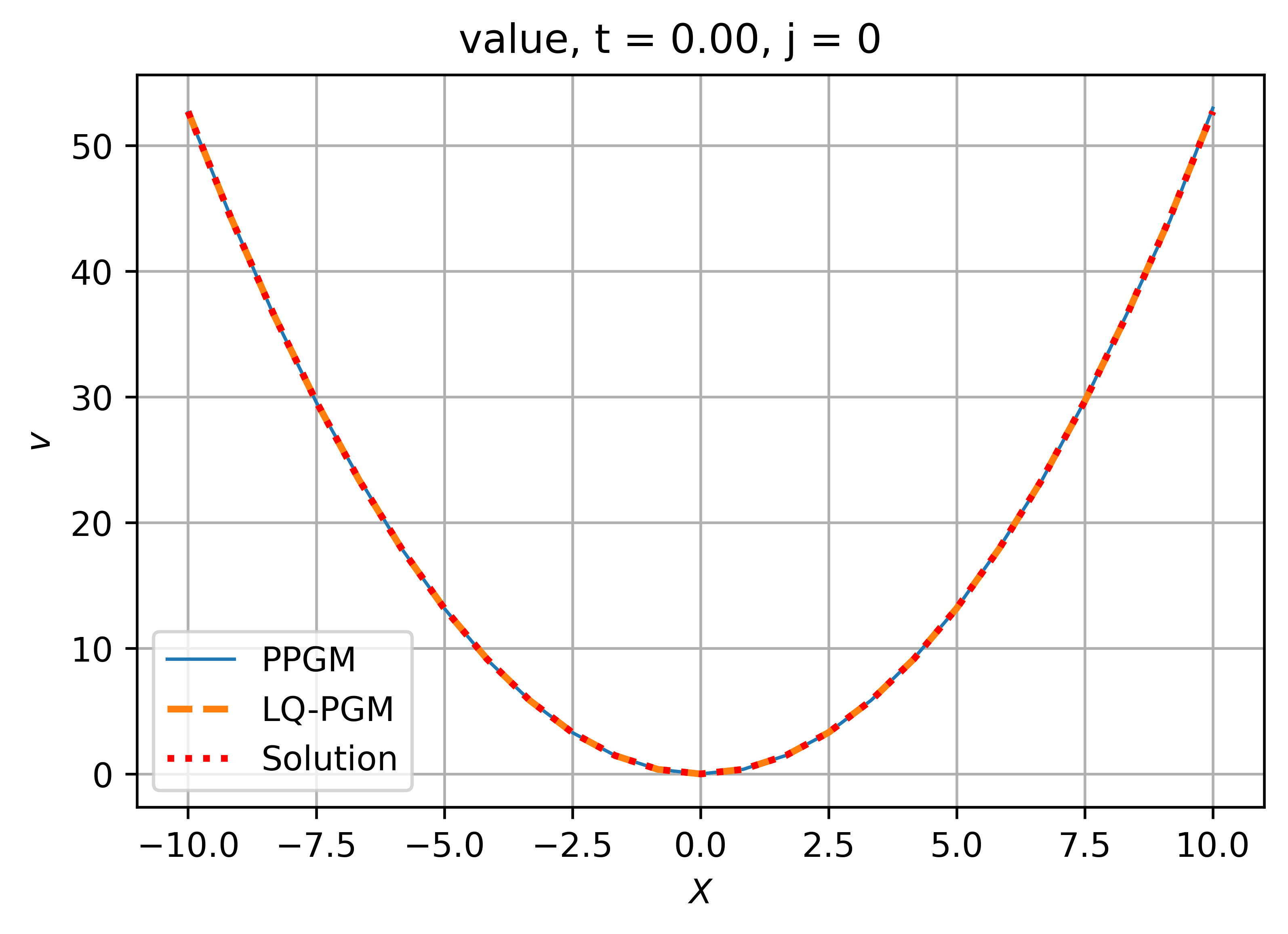}
\caption{Value function at time 0.}
\end{subfigure}
\end{minipage}
\caption{Comparison between algorithm errors for the unconstrained LQ problem. }
\label{fig_ex1_h2}
\end{figure}

\subsubsection{Convex problem with singular assumptions} \label{sec_singular}

Data used are $n = 2, m = 2$, $T = 1$,  
$Q=0$, $S=0$, $R=0$ and 
$$A  = \begin{pmatrix}
0.292 & 0.11 \\
-0.25 & 0.234 
\end{pmatrix},\
B  = \begin{pmatrix}
-0.177 & -0.204 \\
-0.157 & 0.077 
\end{pmatrix},\
C  = \begin{pmatrix}
0.052 & 0.019 \\
0.04 & 0.093 
\end{pmatrix},
$$
$$
D  = \begin{pmatrix}
0.852 & 0.189 \\
-0.236 & 1.085 
\end{pmatrix},\
G  = \begin{pmatrix}
1.376 & 0.01 \\
0.01 & 0.539 
\end{pmatrix}.
$$
This problem is convex  and satisfies the singular assumptions with $\mu = 0.539$, $\A$ and $\D - \B^\top \A^{-1}\B$ positive definite.  
In this example, the default algorithm configuration leads to stability issues. The number of sub-steps (100) for each sub-algorithm appears to be too high, and errors propagate and diverge. Instead, we reduce the number of sub-steps to $l_2 = l_3 = 10$ for this case, and the control and BSDE sub-algorithms use learning rates $\rho = 0.002$ and $\delta = 0.005$, respectively. We increase the batch size to $M = 100$, and run for $k_1 = 500$ total iteration steps.
We plot the same graphs for this example as the previous one, see Figure \ref{fig_ex2_h2}. The PPGM algorithm runs for a much higher number of steps than the LQ-PGM algorithms, but the individual steps are less computationally costly than for the standard example. The large control error is possibly due to the algorithm design changes, which achieves lower accuracy as a price of stability, but it appears the control error does not impact the value function, as it is still accurately found.

\begin{figure}[H]
\centering
\begin{minipage}{.33\textwidth}
\centering
\begin{subfigure}[b]{\textwidth}
\includegraphics[width=\textwidth]{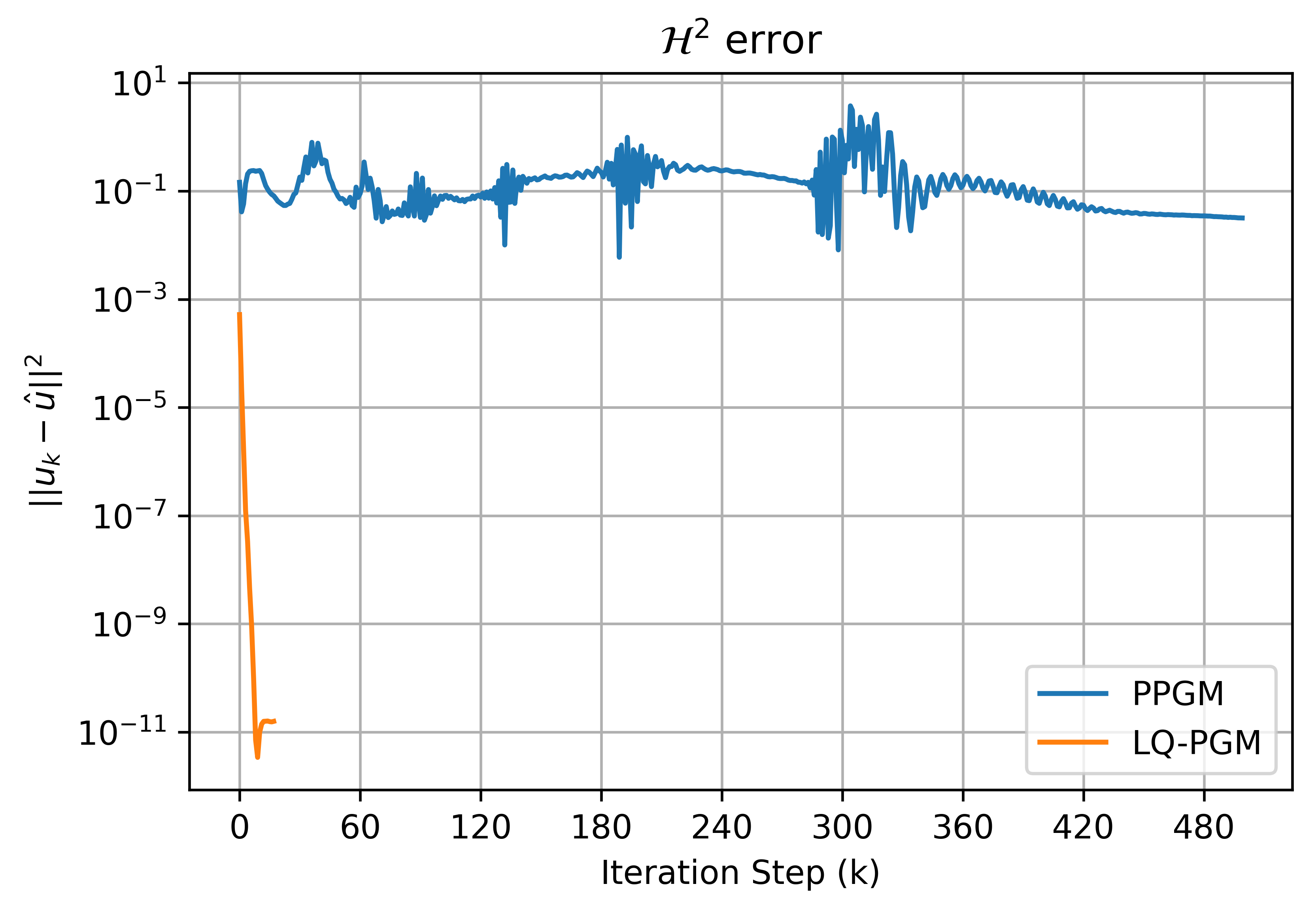}
\caption{Control error.}
\end{subfigure}
\end{minipage}%
\begin{minipage}{.33\textwidth}
\centering
\centering
\begin{subfigure}[b]{\textwidth}
\includegraphics[width=\textwidth]{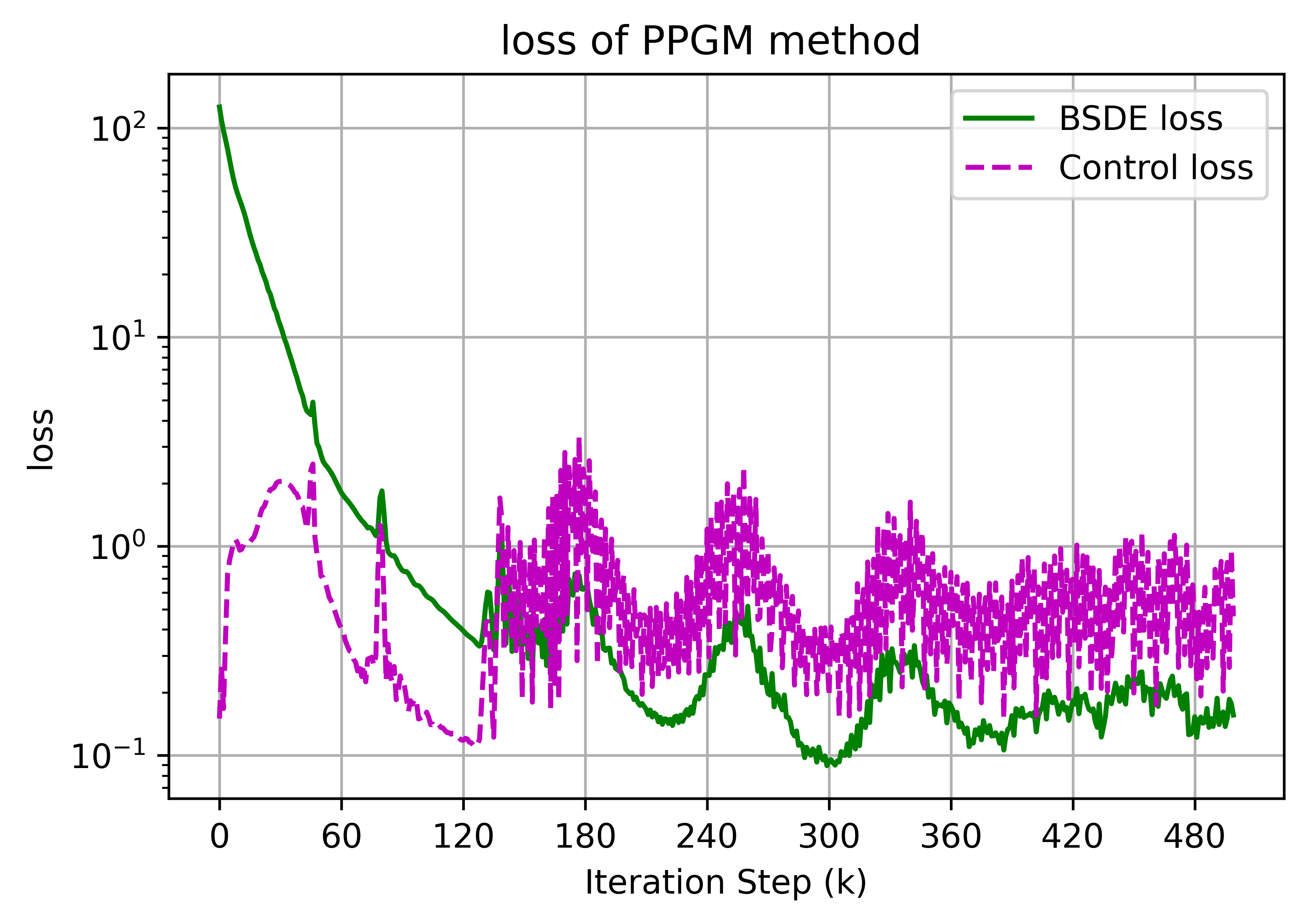}
\caption{Loss functions for  PPGM  }
\end{subfigure}
\end{minipage}
\begin{minipage}{.33\textwidth}
\centering
\centering
\begin{subfigure}[b]{\textwidth}
\includegraphics[width=\textwidth]{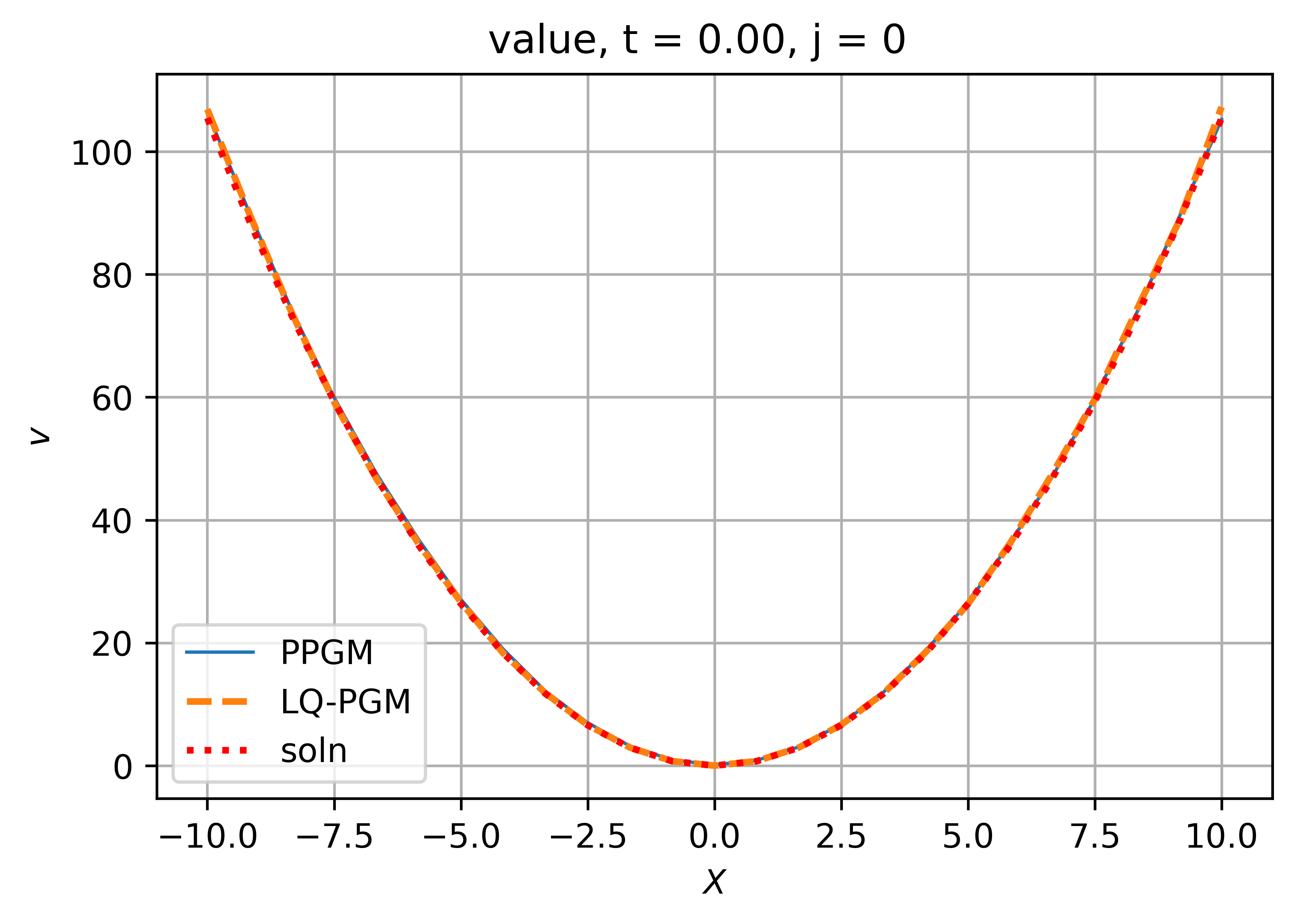}
\caption{Value function at time 0.}
\end{subfigure}
\end{minipage}
\caption{Comparison between algorithm errors for the unconstrained LQ problem. }
\label{fig_ex2_h2}
\end{figure}

\subsubsection{Non-Convex problem with standard assumptions}\label{nonconvex}

Data used are $n = 1, m = 5$, $T = 1$,  
$A  = 0.083$, $C  = -0.314$,   $G  = 0.78$ and
\begin{align*}
B & = \begin{pmatrix}
0.22 & -0.5 & -0.198 & -0.353 & -0.408 
\end{pmatrix}\\
D & = \begin{pmatrix}
-0.154 & -0.103 & 0.039 & 0.081 & 0.185 
\end{pmatrix}\\
\begin{pmatrix}
Q & S^\top \\
S & R
\end{pmatrix} 
& = \begin{pmatrix}
0.2 & 0.335 & -0.482 & 0.25 & 0.489 & 0.248 \\
0.335 & 0.868 & 0.098 & -0.147 & 0.065 & 0.084 \\
-0.482 & 0.098 & 0.839 & 0.025 & 0.007 & 0.112 \\
0.25 & -0.147 & 0.025 & 1.168 & 0.173 & 0.05 \\
0.489 & 0.065 & 0.007 & 0.173 & 0.82 & 0.059 \\
0.248 & 0.084 & 0.112 & 0.05 & 0.059 & 1.083 
\end{pmatrix}. 
\end{align*}

This problem is non-convex due to the $S$ term,  even though the running cost is strongly convex in control with $\mu = 0.73$.  The value function is concave, see
Figure \ref{fig_ex3_h2}. The algorithms successfully converge in this non-convex scenario.

\begin{figure}[H]
\centering
\begin{minipage}{.33\textwidth}
\centering
\begin{subfigure}[b]{\textwidth}
\includegraphics[width=\textwidth]{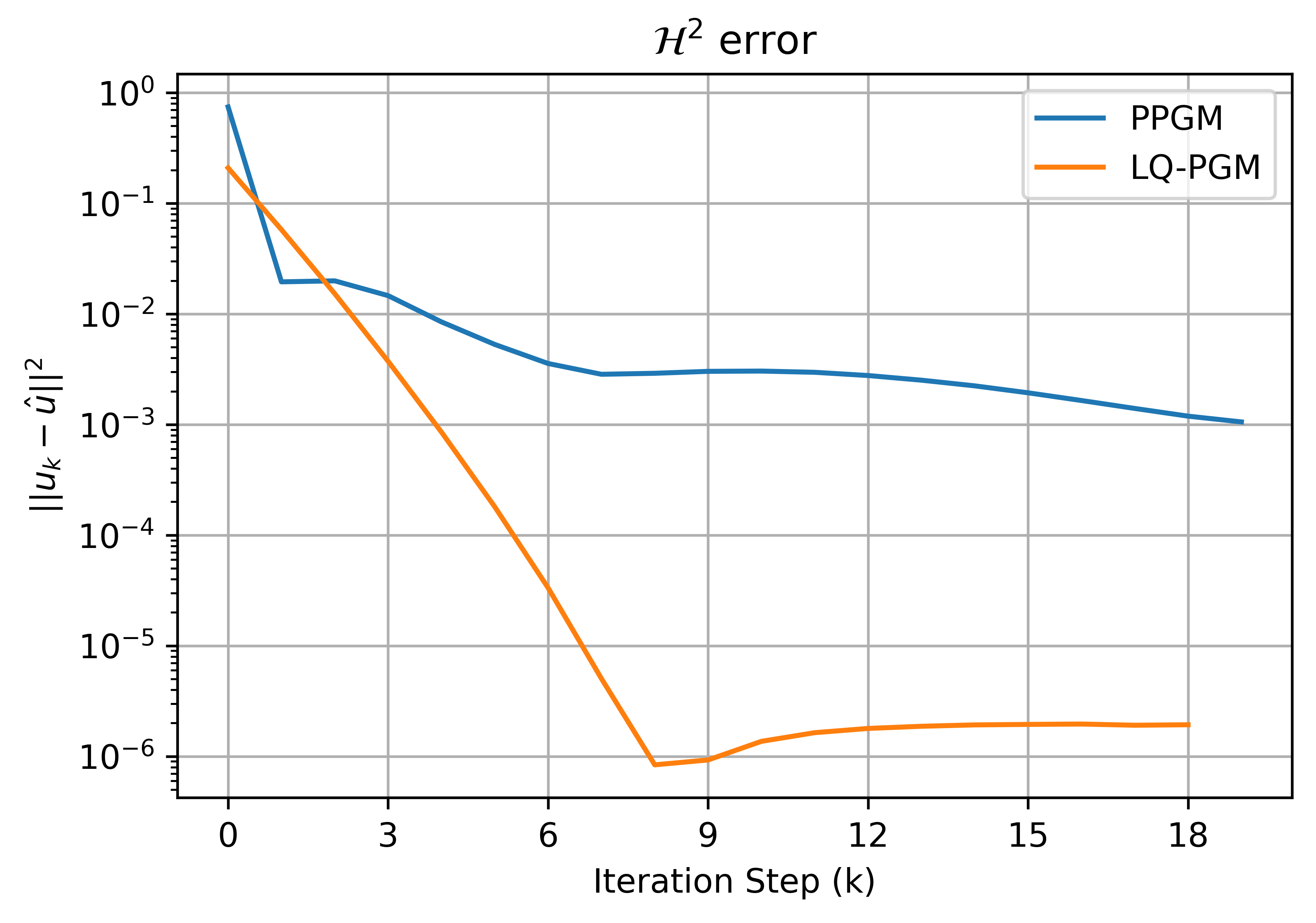}
\caption{Control error.}
\end{subfigure}
\end{minipage}%
\begin{minipage}{.33\textwidth}
\centering
\centering
\begin{subfigure}[b]{\textwidth}
\includegraphics[width=\textwidth]{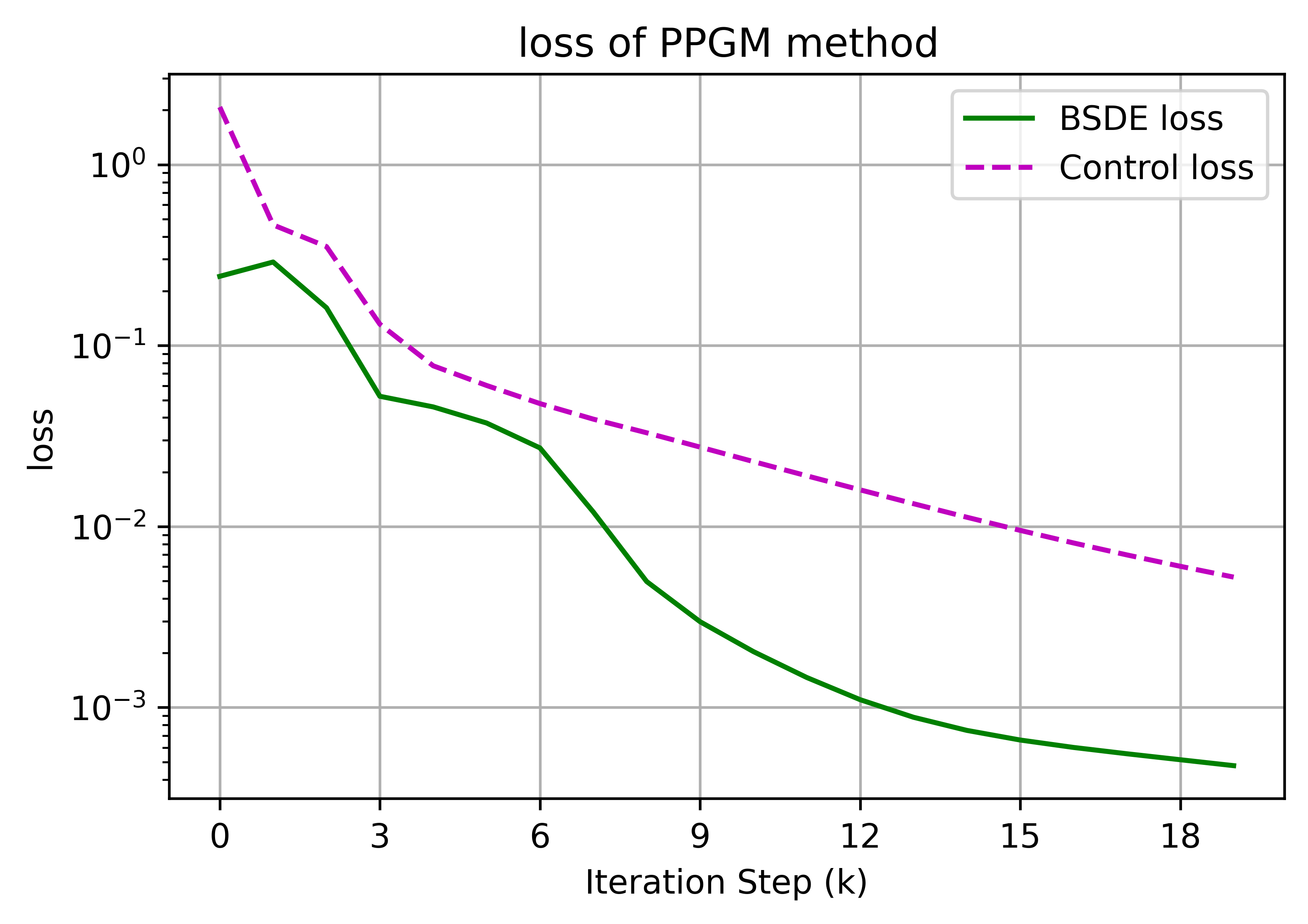}
\caption{Loss functions for  PPGM }
\end{subfigure}
\end{minipage}
\begin{minipage}{.33\textwidth}
\centering
\centering
\begin{subfigure}[b]{\textwidth}
\includegraphics[width=\textwidth]{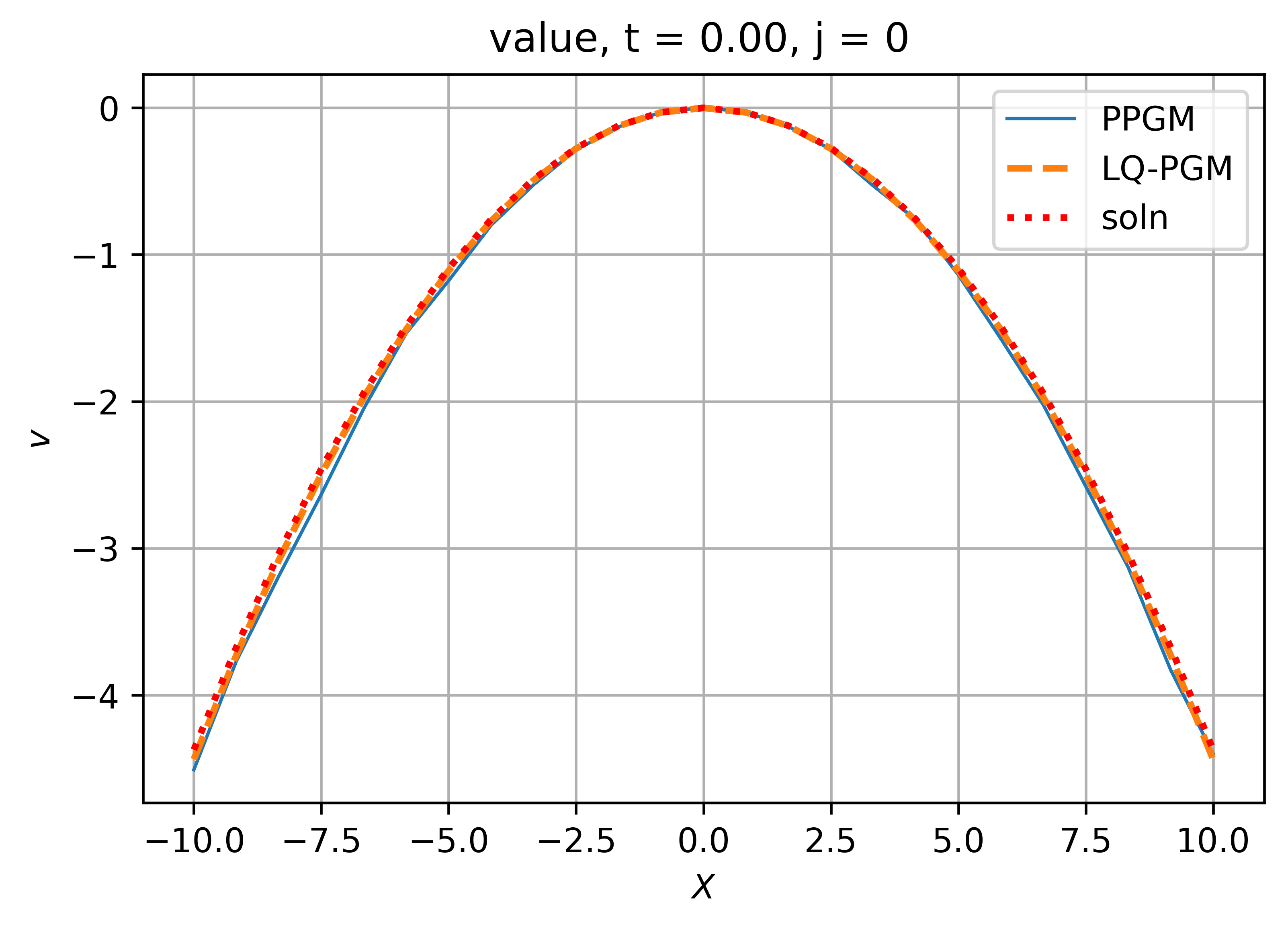}
\caption{Value function at time 0.}
\end{subfigure}
\end{minipage}
\caption{Comparison between algorithm errors for the unconstrained LQ problem. }
\label{fig_ex3_h2}
\end{figure}

\subsubsection{Cone constrained convex problem with standard assumptions}

Data used are the same as ones in Example \ref{nonconvex}, and control constraint set is 
$U = \R^5_+$, the positive cone. 
The HJB equation (\ref{eq_pde}) for this problem is given by
\begin{align*}
\frac{\partial v}{\partial t} + \inf_{u \in U} \left\{ \left(A_t x + B_t u\right)^\top \frac{\partial v}{\partial x} + \half \left(C_t x + D_t u\right)^\top \frac{\partial^2 v}{\partial x^2} \left(C_t x + D_t u \right) + \half x^\top Q_t x + \half u^\top R_t u \right\} = 0
\end{align*}
with terminal condition $v(T, x) = (1/2) x^\top G x$. Following \citep{hu2005constrained}, we propose that the optimal control and value can be written in the following form
\begin{align*}
u^*_t(x) & = \xi^+_t \max(0, x) + \xi^-_t \max(0, -x) \\
v_t(x) & =  P^+_t\max(0, x)^2 +  P^-_t \max(0, -x)^2,
\end{align*}
where $\xi^+_t  = \argmin_{\xi \in \R^{ m}_+} H^+(t, \xi, P^+_t)$, 
$\xi^-_t  = \argmin_{\xi \in \R^{ m}_+} H^-(t, \xi, P^-_t)$, and $H^+, H^-$ are defined by
\begin{align*}
H^\pm(t, \xi, P) & := \xi^\top \left(\half R_t + D_t^\top P D_t\right)\xi \pm 2  \xi^\top \left( B_t^\top P  + D_t^\top P C_t + S_t \right), 
\end{align*}
and $P^+$, $P^- \in \C^0([0,T]; \R)$ solve the ODEs
\begin{align*}
\frac{dP^\pm}{dt} & = -\left(\left(A_t + A_t^\top + C_t^\top C\right)P^\pm + \half Q + \min_{\xi \in \R^{ m}_+} H^\pm(t, \xi, P^\pm_t) \right), 
\end{align*}
with $P^+_T = P^-_T = (1/2)G$. 
 Figure \ref{fig_ex4_h2} shows that, unlike Example \ref{nonconvex}, the control constraint ensures that the value function is convex, in contrast to the value function of  Example \ref{nonconvex} (labelled "LQ-PGM") that has a lower value but infeasible. 

\begin{figure}[H]
\centering
\begin{minipage}{.33\textwidth}
\centering
\begin{subfigure}[b]{\textwidth}
\includegraphics[width=\textwidth]{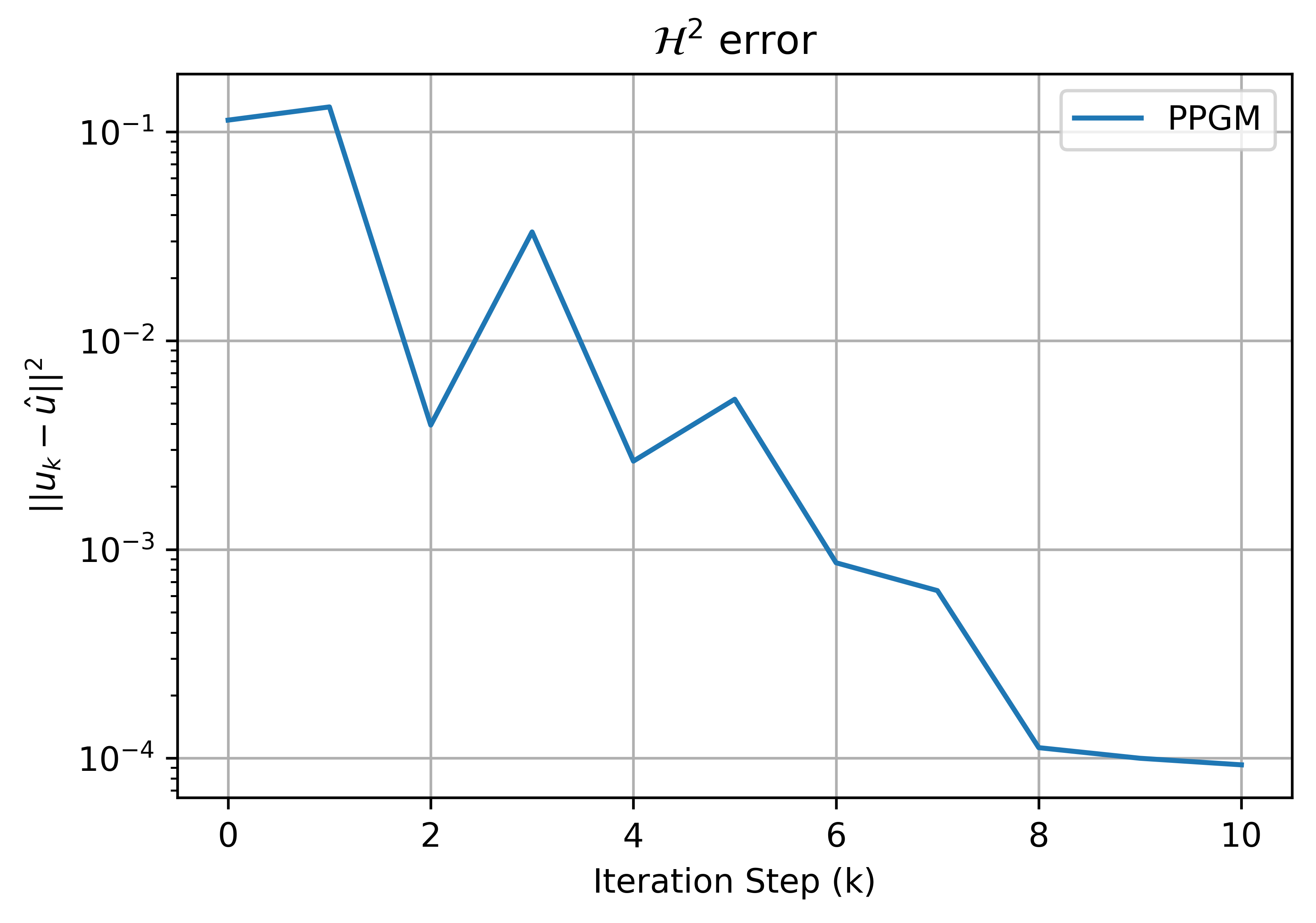}
\caption{Control error.}
\end{subfigure}
\end{minipage}%
\begin{minipage}{.33\textwidth}
\centering
\centering
\begin{subfigure}[b]{\textwidth}
\includegraphics[width=\textwidth]{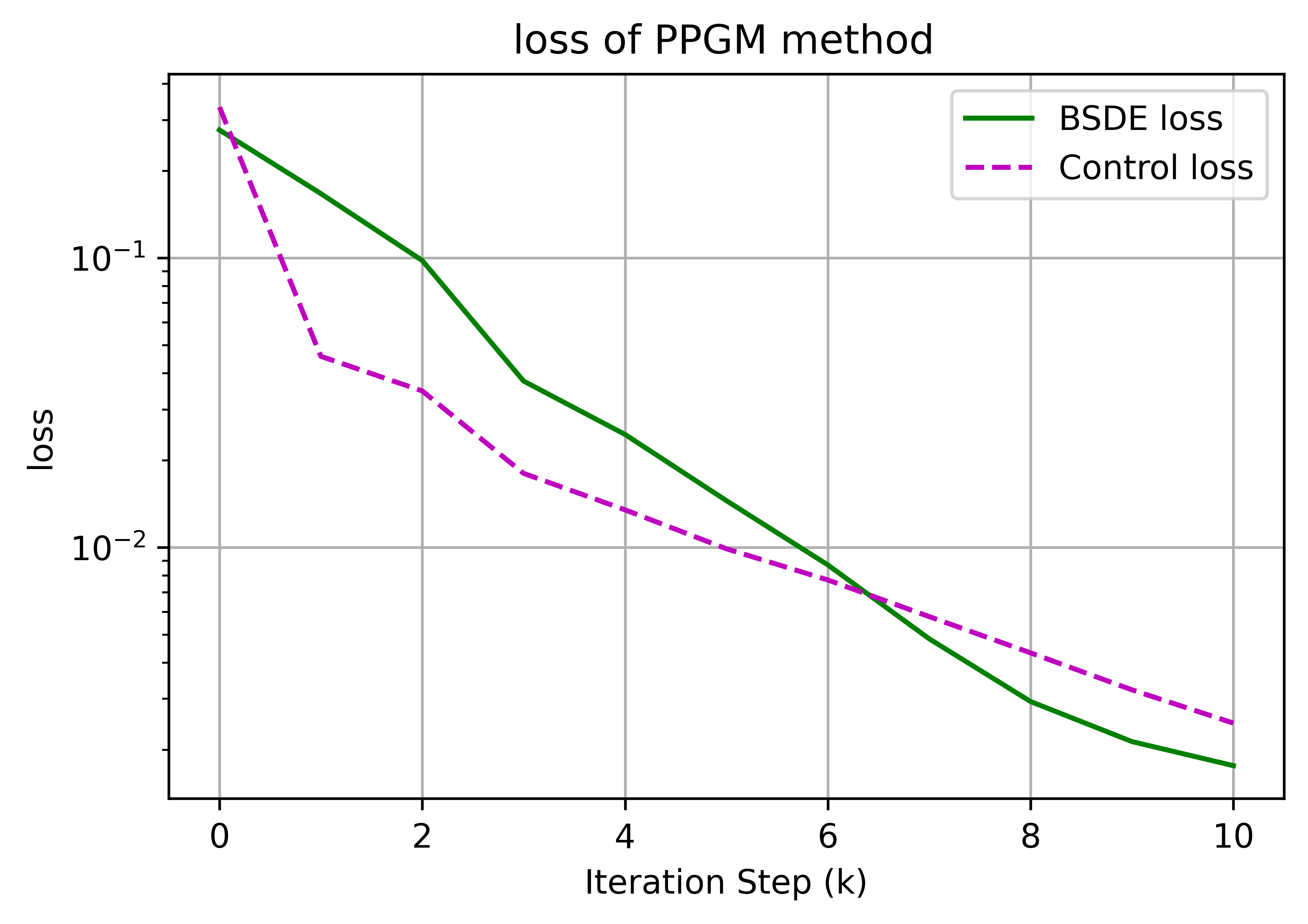}
\caption{Loss functions for  PPGM  }
\end{subfigure}
\end{minipage}
\begin{minipage}{.33\textwidth}
\centering
\centering
\begin{subfigure}[b]{\textwidth}
\includegraphics[width=\textwidth]{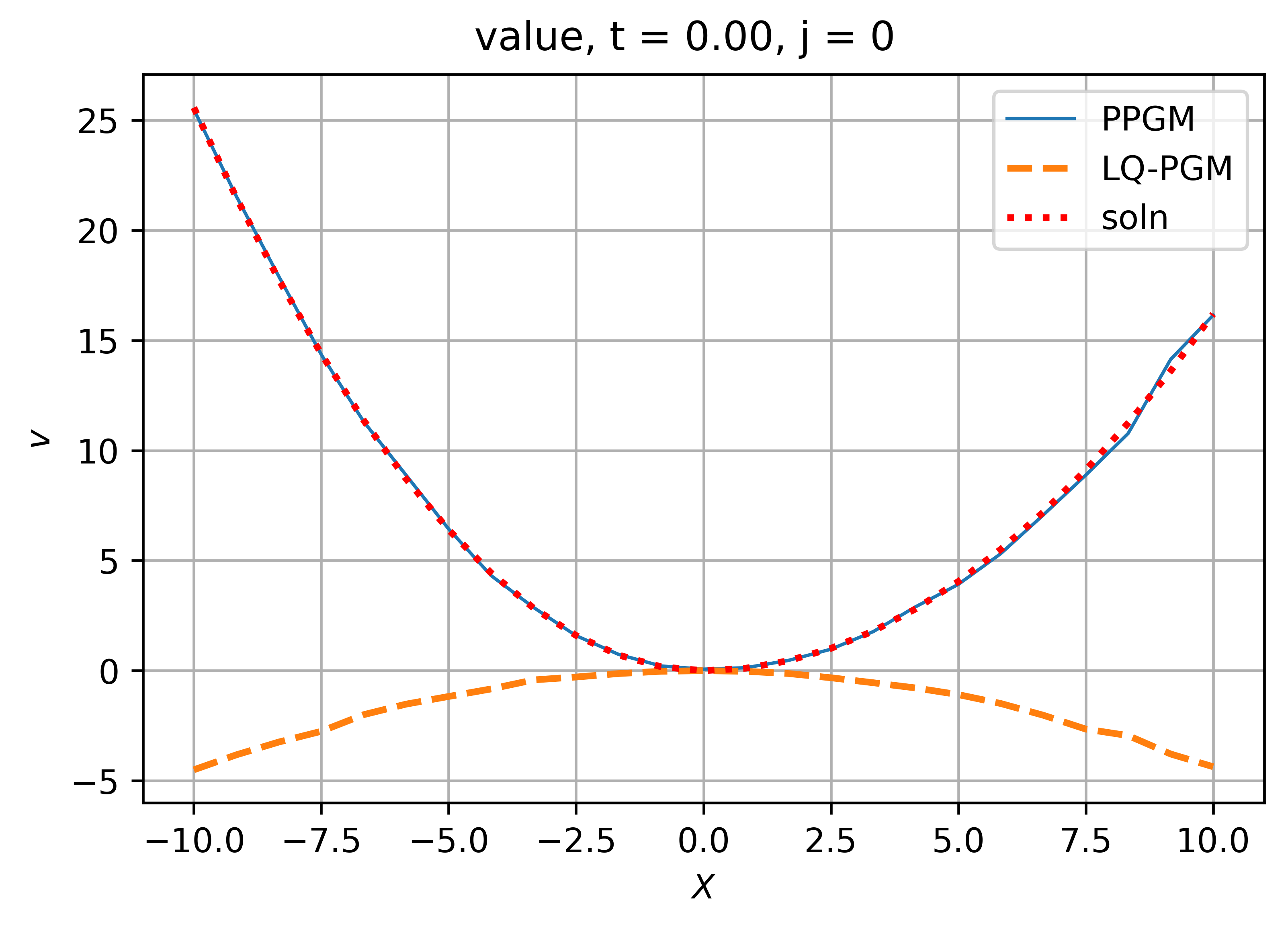}
\caption{Value function at time 0.}
\end{subfigure}
\end{minipage}
\caption{Comparison between algorithm errors for the constrained LQ problem. }
\label{fig_ex4_h2}
\end{figure}

\subsubsection{Sensitivity Analysis}

In this section we consider sensitivity and robustness of the LQ-PGM algorithm. We choose $n = m$ and take coefficients dependent on this dimension to scale the problem. We initialise $A_t, B_t, C_t, D_t \sim \Unif(-0.25 n^{-1}, 0.25 n^{-1})$ as constant matrices, scaled by $n^{-1}$. This scaling factor is used to ensure that the set of coefficients satisfies the assumptions of the convergence analysis for each $n$, and was chosen by inspection. We set $G = \ind_{n} + \half(G_0 + G_0^\top)$ where $G_0 \sim \Unif(-0.5, 0.5)$, to ensure that $G$ is symmetric. We set $Q_t$ to be a matrix with each entry equal to 0.2, $S_t = S \sim \Unif(-0.5, 0.5)$, and finally $R_t = \ind_n + R$ where $R \sim \text{Unif}\left(-0.5n^{-1}, 0.5n^{-1}\right) $ matrix.  This scaling ensures that the standard assumptions are satisfied for the problem for any dimension choice $n$ and random matrix $R$. We set $T = 1.0$ throughout. 

\begin{example} \label{ex_uncon}  We test if the theoretical exponential convergence rate is matched by the algorithm with $n = m = 100$. The algorithm runs for 20 steps in about 45 seconds, when the criteria value $\Delta_k$ drops below $10^{-6}$.
Figure \ref{fig_pgm} (a) numerically verifies the first statement of Theorem \ref{thm_convergence}. We run the algorithm 10 times with 10 different randomly generated model coefficients, and compare to the analytical solutions. We plot the mean and standard deviation of the relative control and value errors, given at iteration step $k \in \N$ by
$\|\alpha^k - \alpha^*\|_\infty/\|\alpha^*\|_\infty$ and $\|a^k - a^*\|_\infty/\|a^*\|_\infty$ respectively. 
We see that the log-scaled decay is linear, agreeing with the results of Theorem \ref{thm_convergence} (i), until the convergence criteria is reached. The error flattening out indicates that the algorithm cannot produce more accuracy beyond standard numerical errors.


\end{example}

\begin{example}

We now consider changes in the dimensions $n$ and $m$ of the problem. As the underlying coefficients depend on $n$ (in order to ensure the assumptions of the convergence results hold), the accuracy of the algorithm artificially increases as $n$ increases. We therefore only consider the effect on the runtime. Figure \ref{fig_pgm} (b) shows the effect of the runtime on the dimension when we set $n = m$ to range between 1 and 100. We run 5 times and take an average. The log-scaled graph is sub-linear in dimension, suggesting that the time-dependence is not exponential. 

\end{example}

\begin{example}
We now consider the impact of strong convexity on the problem, under the standard conditions. Let all coefficients other than $R$ be fixed and define 
$R_t = r\ind_{m,m}$, for all $t \in [0,T]$,
for some $r > 0$. In this instance we have $\mu = \left\|R\right\|_\infty = r $.
We see that large values of $r$ ensure the assumptions of Theorem \ref{thm_convergence} are satisfied. We fix $n = m = 5$ and consider the problem while varying only the value of $r$. Figure \ref{fig_pgm} (c) shows the final error of the algorithm for different values of $r$. As $r$ decreases the error increases dramatically, appearing to increase exponentially even with log scaling.
We see that when $r$ is smaller than some critical value $r_{\min}$, the algorithm does not accurately return the value at time 0 and appears to be chaotic. For this example, it appears that the critical value is approximately $r_{\min} \approx 0.1$.
\end{example}

\begin{figure}[H]
\centering
\begin{minipage}{.33\textwidth}
\centering
\begin{subfigure}[b]{\textwidth}
\includegraphics[width=\textwidth]{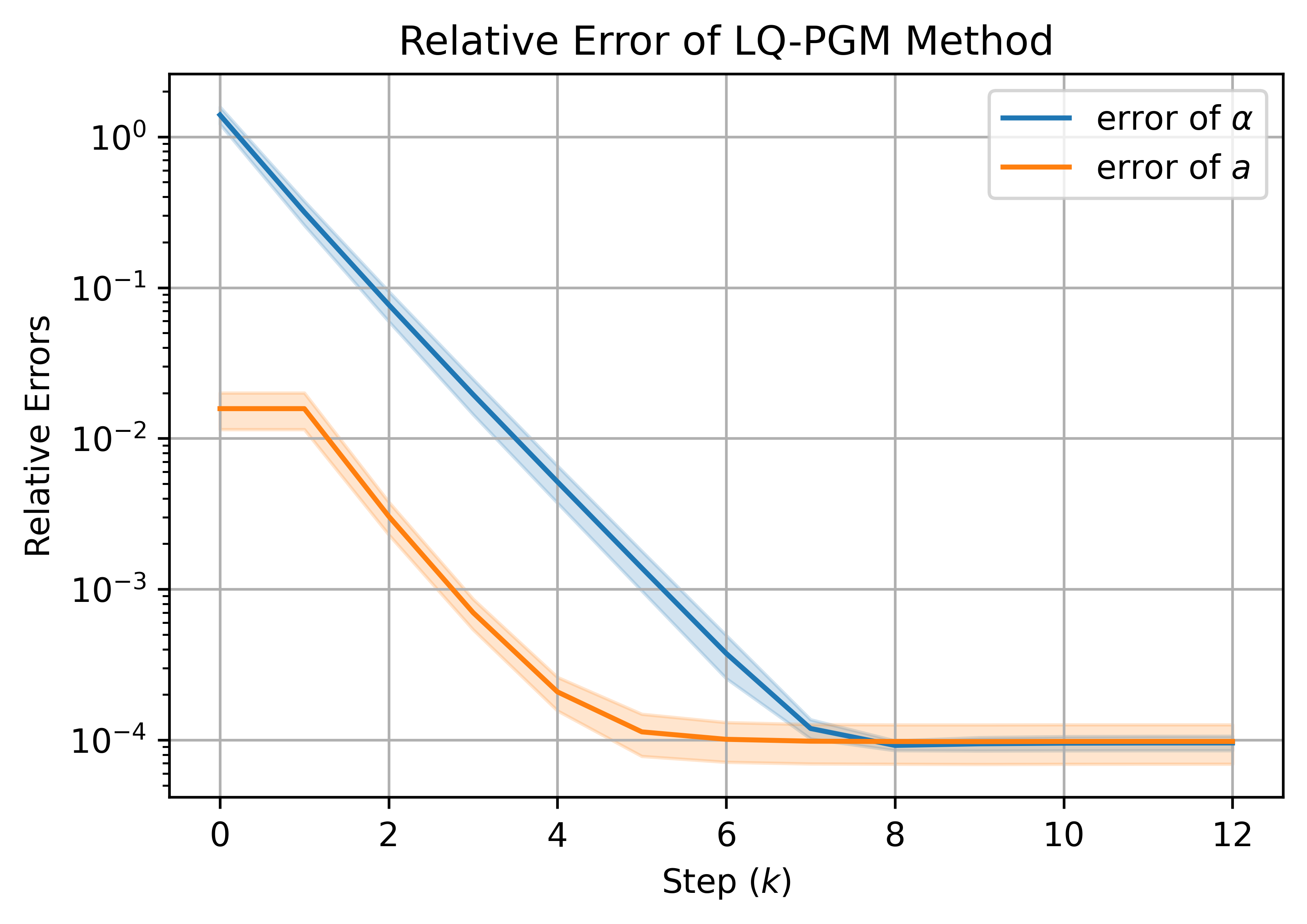}
\caption{Control error (log-scaled).}
\end{subfigure}
\end{minipage}%
\begin{minipage}{.33\textwidth}
\centering
\centering
\begin{subfigure}[b]{\textwidth}
\includegraphics[width=\textwidth]{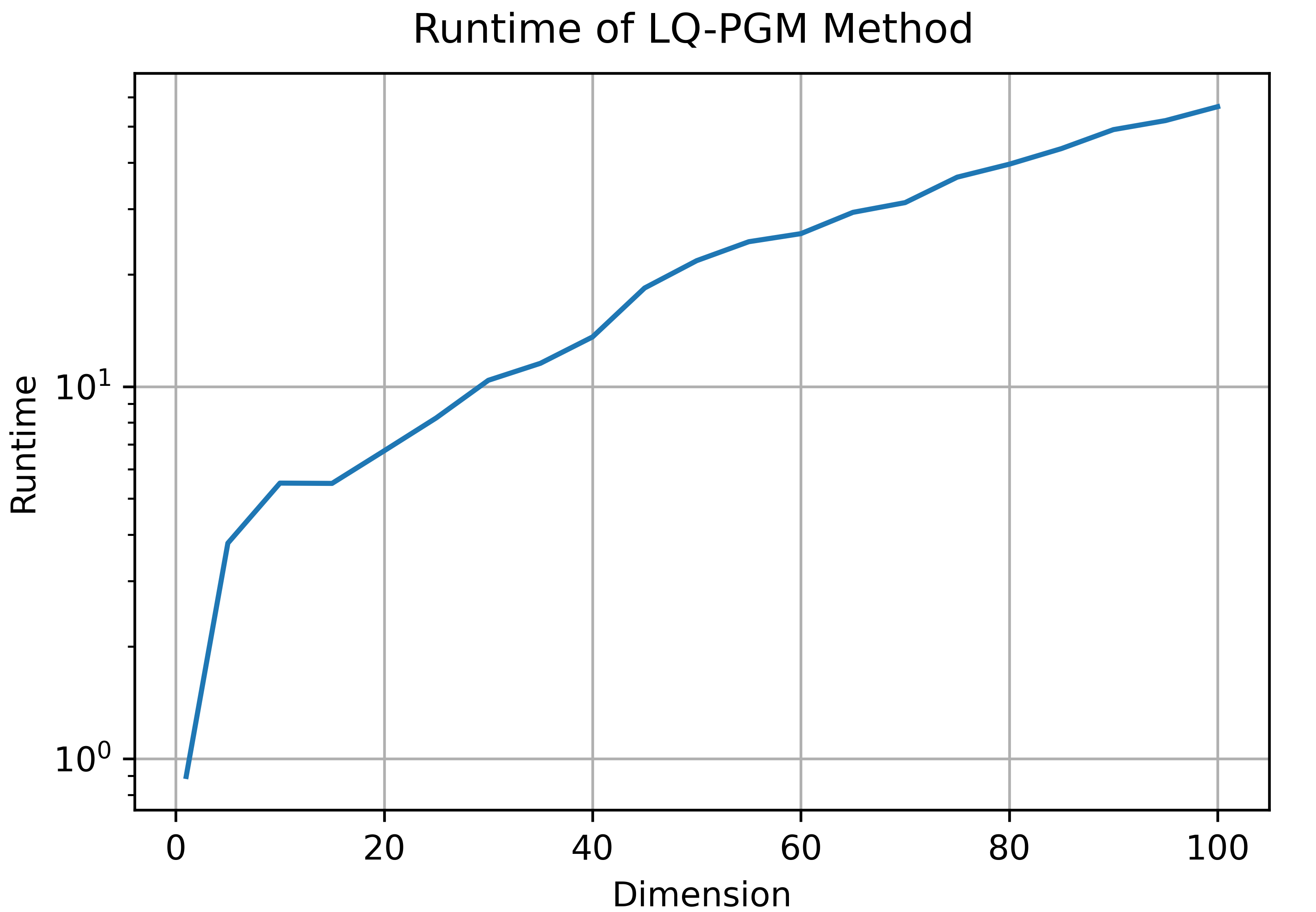}
\caption{Runtime (log-scaled)}
\end{subfigure}
\end{minipage}
\begin{minipage}{.33\textwidth}
\centering
\centering
\begin{subfigure}[b]{\textwidth}
\includegraphics[width=\textwidth]{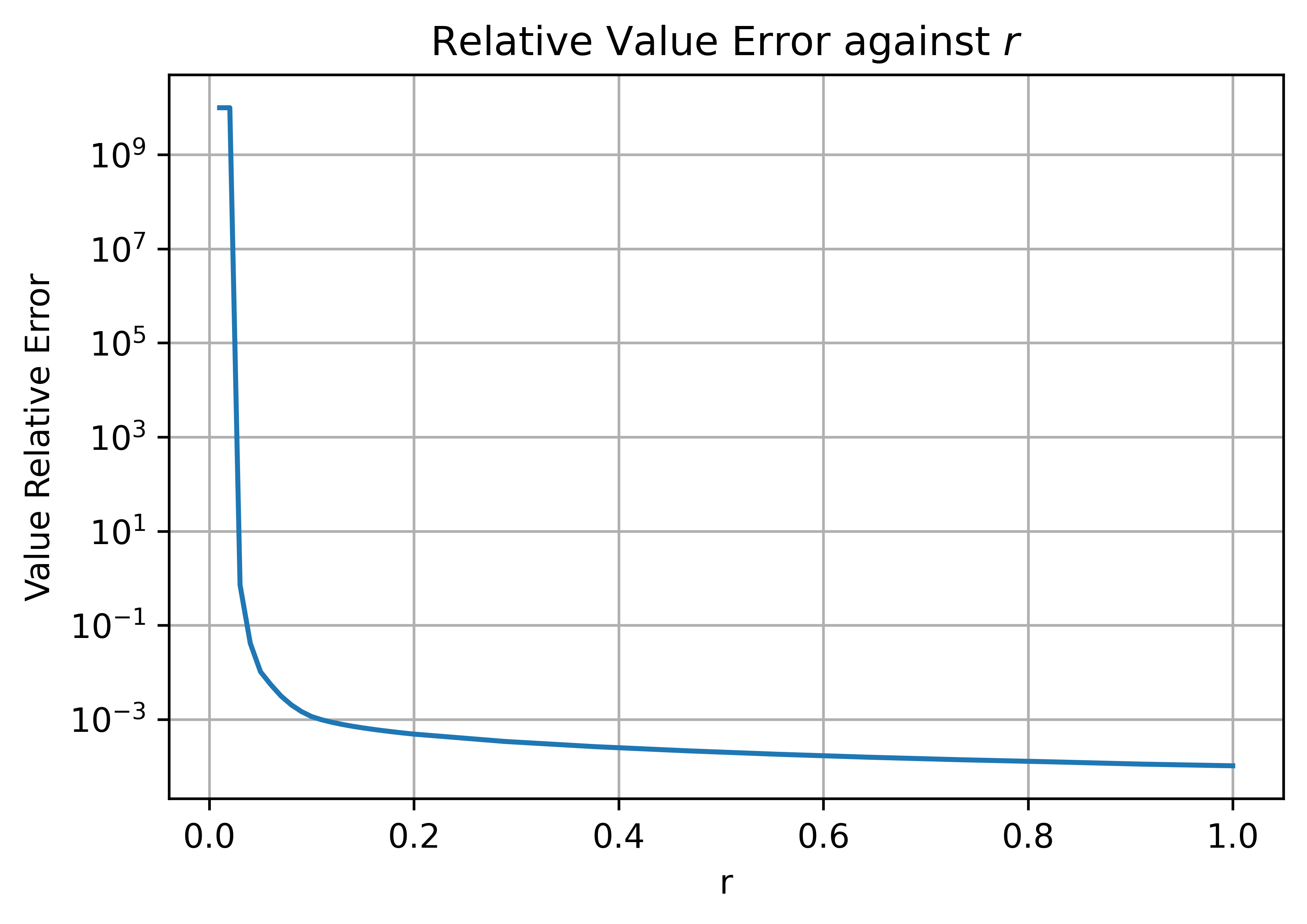}
\caption{Value error vs $r$ (log-scaled).}
\end{subfigure}
\end{minipage}
\caption{Control and value errors and runtime of unconstrained PGM applied to LQ problem.}
\label{fig_pgm}
\end{figure}
\subsection{Convex Problem with Non-Polynomial Running Cost}

We consider the following unconstrained problem with non-convex  running cost
\begin{align*}
\inf_{u \in \U} \E& \left[ - \int_0^T \sum_{i = 1}^m \cos(u_s^i)
ds  + \half \left|X^u_T\right|^2 \right],
\end{align*}
where $X^u$ satisfies SDE (\ref{eq_state}), $m=n$, $D$ is a nonsingular matrix,  $A= - (1/2) B D^{-1}D^{-1}$ and $C=(D^{-1})^\top B^\top$. By construction we have $\A = \B = 0$, and the singular case of Assumption \ref{ass_strong} is satisfied. By Ito's formula, we have
\begin{align*}
\E\left[\left|X^u_T\right|^2\right] & = x_0^2 + \left[\int_0^T \left|D_t u_t\right|^2 dt \right].
\end{align*}
It is clear that $u = 0$ is an optimal control for this problem.
Take $n = 3, m = 3$, $T = 1.0$, and
$$
B  = \begin{pmatrix}
0.013 & 0.027 & 0.062 \\
0.099 & 0.126 & -0.158 \\
0.057 & 0.028 & 0.02 
\end{pmatrix}, \
D  = \begin{pmatrix}
1.346 & -0.11 & 0.126 \\
-0.134 & 1.474 & 0.153 \\
0.011 & 0.064 & 1.439 
\end{pmatrix},
$$
where $B$ and $D$ are generated randomly, with 1.5 added to the diagonal elements of $D$. In this case, we have $\A = \B = 0$ and  $\D = \D - \B^\top \B$ positive definite.
Figure \ref{fig_ex5_h2} shows the convergence of the general PPGM algorithm to the optimal control $u^* = 0$. 

\begin{figure}[H]
\centering
\begin{minipage}{.33\textwidth}
\centering
\begin{subfigure}[b]{\textwidth}
\includegraphics[width=\textwidth]{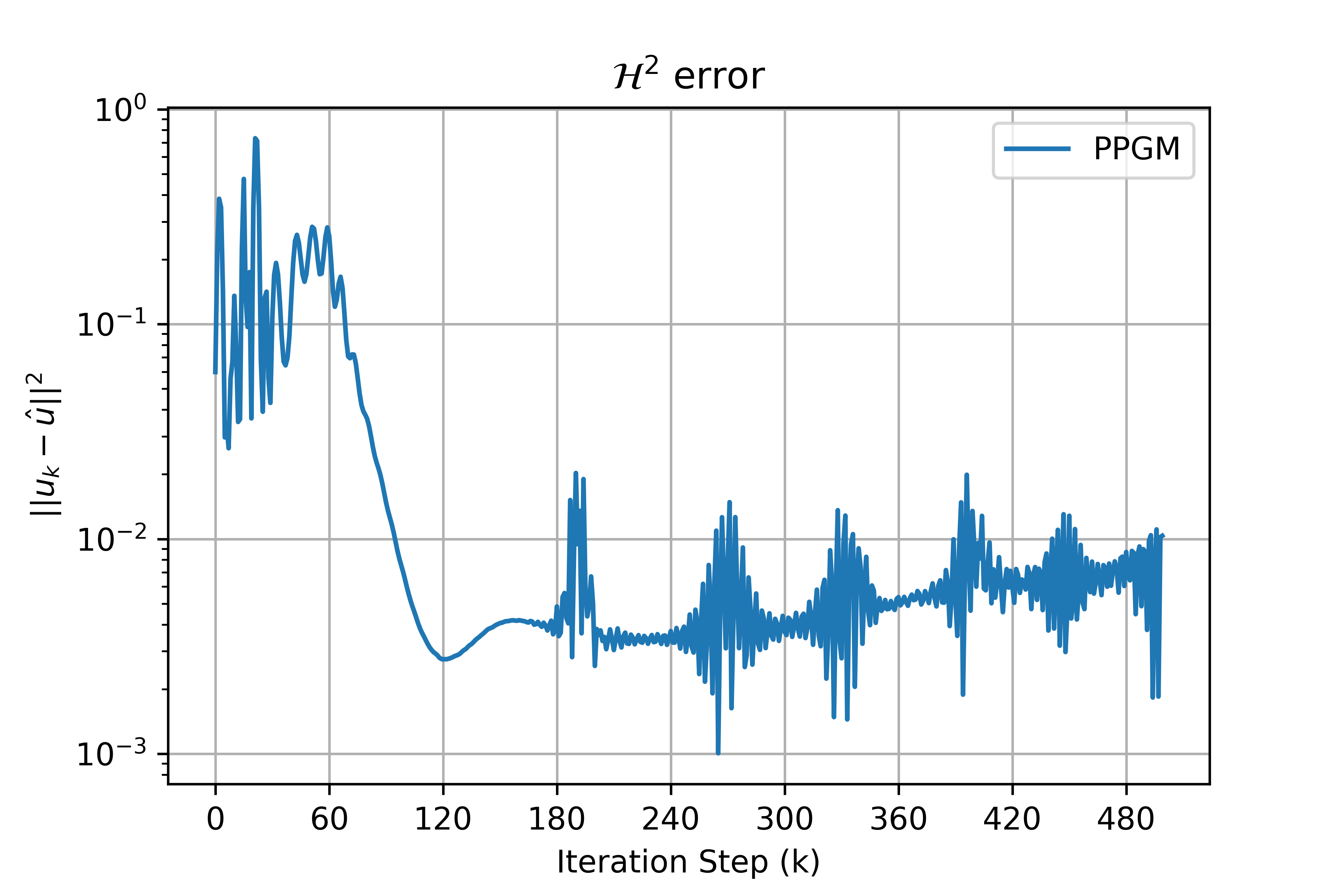}
\caption{Control error.}
\end{subfigure}
\end{minipage}%
\begin{minipage}{.33\textwidth}
\centering
\centering
\begin{subfigure}[b]{\textwidth}
\includegraphics[width=\textwidth]{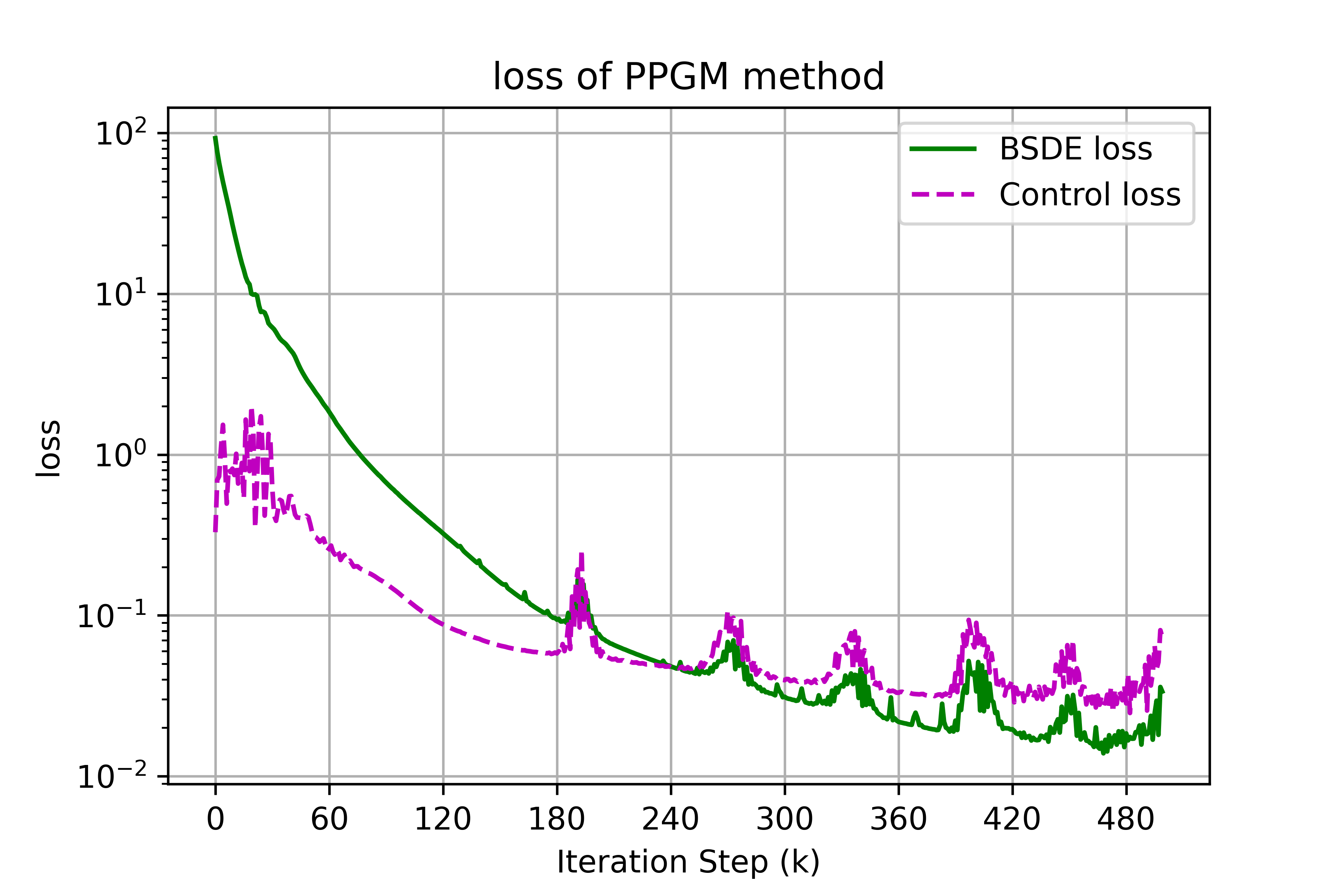}
\caption{Loss functions for  PPGM  }
\end{subfigure}
\end{minipage}
\begin{minipage}{.33\textwidth}
\centering
\centering
\begin{subfigure}[b]{\textwidth}
\includegraphics[width=\textwidth]{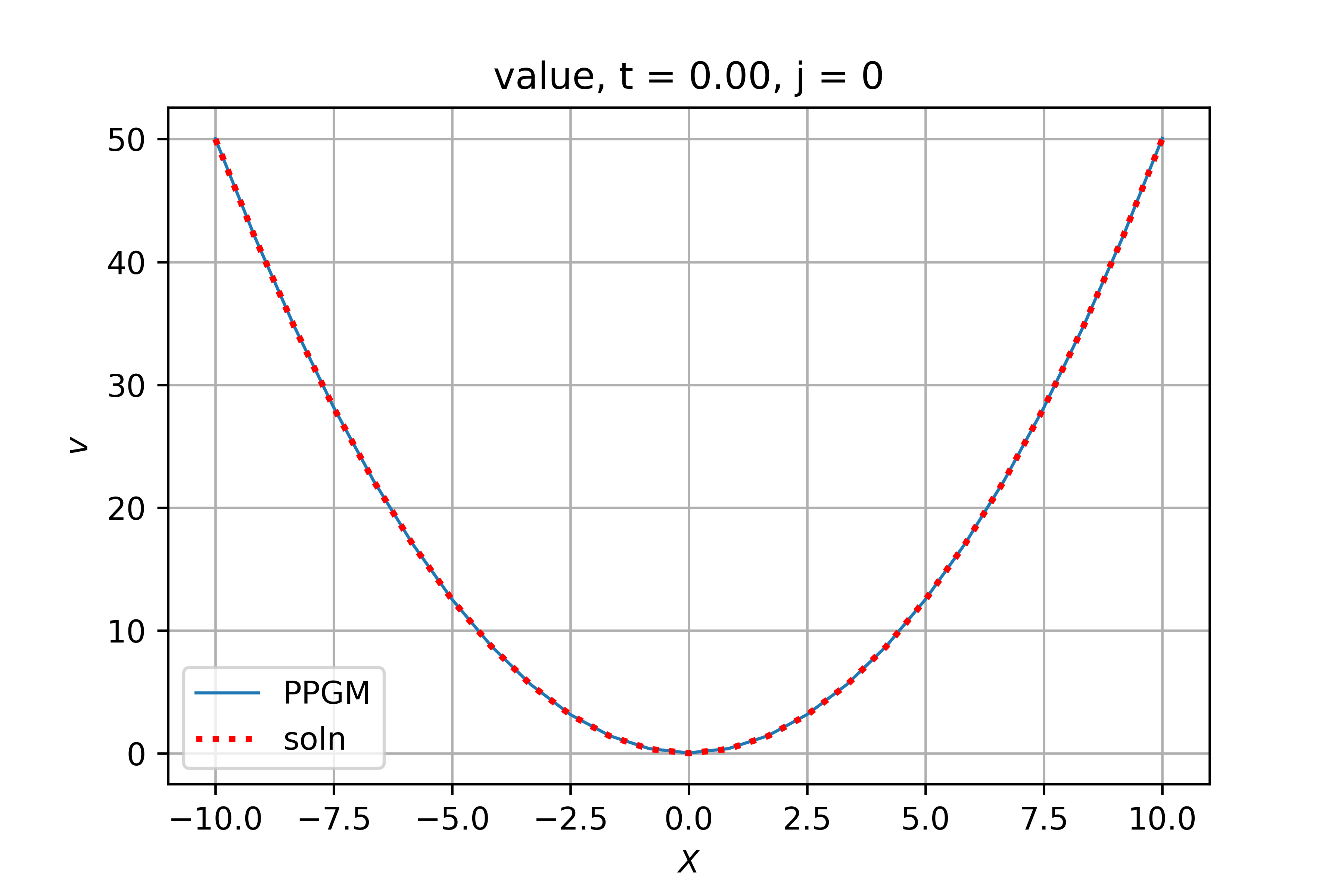}
\caption{Value function at time 0.}
\end{subfigure}
\end{minipage}
\caption{Algorithm results for non-strongly-convex running cost. }
\label{fig_ex5_h2}
\end{figure}

\section{Conclusion} \label{sec_conc}

In this paper we determine sufficient conditions on  the convergence of PPGM for stochastic control problems with control dependent diffusion coefficients. Our approach is novel in the literature in the sense that we do not directly discuss the regularity of  the solutions $(Y,Z)$  to the adjoint BSDE, which is difficult, especially for $Z$ process, we  instead give an alternative representation of the gradient of the Hamiltonian in terms of the adjoint operators induced by the linear dynamics of the state process. We then show the linear convergence of the PPGM under strong convexity conditions on either running or terminal cost functions, the latter requiring further strong convexity of the state process with respect to the control. We introduce the deep neural network algorithms to implement our PPGM and present numerical results that capture the theoretical convergence results.
Our convergence analysis crucially relies on linear state process. If drift and diffusion coefficients are nonlinear functions, then we may only express $Z^k$ in terms of Malliavin derivatives, which would lead to great complexity as regularity estimates of $Z^k$ require even stricter regularity estimates on the PPGM control $\phi^k$, a function of the process $Z^{k-1}$ from the previous step, we leave this and other questions to future research.  



\appendix
\section{Proofs} 

We show that the sequence $(u^k)_{k \in \N}$, starting at some $u^0 \in \U$ and determined by \eqref{update_control}, is contracting and converges to some element $u^* \in \HH^2(\R^m)$ in  $\left\|\cdot\right\|_{\HH^2}$ norm. We then show that the limiting control $u^{*}$ is a stationary point of the control problem. We denote by $K$ a generic constant that does not depend on $\mu$ or any control $u^k$, and may change from line to line.

%

Using the regularity for the FBSDE solutions, we first show the update formula (\ref{update_control}) is well defined. 

\begin{lemma}\label{lemma1}
Let $u^0 \in \U$. Then the sequence $(u^k)_{k \in \N}$ generated by \eqref{update_control} satisfies $u^k \in \U$ for all $k \in \N$. 
\end{lemma}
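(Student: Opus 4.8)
The plan is to show by induction that each iterate $u^k$ lies in $\U = \{u \in \HH^2(\R^m) : u_s \in U \text{ a.s.}\}$, i.e.\ that $u^k$ is square-integrable and valued in the closed convex set $U$ almost everywhere. The membership in $U$ is immediate from the structure of the update: $u^{k+1}_t = \prox_U(\cdots)$ and $\prox_U$ takes values in $U$ by definition, so the only real content is the square-integrability, $u^k \in \HH^2(\R^m)$.

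The base case $u^0 \in \U$ is given. For the inductive step, assume $u^k \in \U$. First I would observe that, since $u^k \in \HH^2$ and $u^k_s \in U$ a.s., the associated feedback process gives a well-defined state process $X^k = X^{0,x_0,\phi^k} \in \s^2(\R^n)$ by the standard linear SDE estimate (cited as \citep[Theorem 3.2.2]{zhang2017backward} in the excerpt), using the linear growth of the coefficients $A,B,C,D$ and $\E\int_0^T |u^k_s|^2\,ds < \infty$. Then the adjoint BSDE \eqref{bsde} has a driver that is affine in $(Y,Z)$ with an inhomogeneous term $\partial_x f_s(X^k_s, u^k_s)$ and a terminal condition $\triangledown g(X^k_T)$; by the Lipschitz continuity of $\partial_x f^1$ and $\triangledown g$ together with $X^k \in \s^2$ and the square-integrability of $\partial_x f(0,0)$, both the inhomogeneous term and the terminal condition are in $\HH^2$ resp.\ $\LL^2(\R^n,\F_T)$, so the BSDE admits a unique solution $(Y^k, Z^k) \in \s^2(\R^n) \times \HH^2(\R^n)$ by standard BSDE well-posedness.

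With $(X^k, u^k, Y^k, Z^k) \in \s^2 \times \HH^2 \times \s^2 \times \HH^2$ in hand, the next step is to bound $\partial_u H_t(X^k_t, u^k_t, Y^k_t, Z^k_t)$ in $\HH^2$. From \eqref{eq_def_H} one has $\partial_u H_t(x,u,y,z) = B_t^\top y + D_t^\top z + \partial_u f_t(x,u)$, and using the Lipschitz continuity of $\partial_u f^1$ and $\partial_u f^2$ (uniformly in $t$) plus the square-integrability of $\partial_u f_t(0,0)$, this is controlled by $K(1 + \|X^k\|_{\HH^2} + \|u^k\|_{\HH^2} + \|Y^k\|_{\HH^2} + \|Z^k\|_{\HH^2}) < \infty$. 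Finally, since $\prox_U$ is $1$-Lipschitz (nonexpansive, as the projection onto a closed convex set), and picking any fixed $u_0 \in U$ one gets the pointwise bound $|\prox_U(v)| \le |u_0| + |v - u_0| \le |u_0| + |v| + |\prox_U(u_0)|$ — more simply $|\prox_U(v) - \prox_U(0)| \le |v|$ so $|\prox_U(v)| \le |\prox_U(0)| + |v|$ and $\prox_U(0) \in U$ is a fixed finite vector — hence
\[
\|u^{k+1}\|_{\HH^2} = \left\| \prox_U\!\left(u^k - \tau\, \partial_u H(X^k, u^k, Y^k, Z^k)\right) \right\|_{\HH^2} \le K + \|u^k\|_{\HH^2} + \tau \left\| \partial_u H(X^k, u^k, Y^k, Z^k) \right\|_{\HH^2} < \infty,
\]
so $u^{k+1} \in \HH^2(\R^m)$, and combined with $u^{k+1}_t \in U$ a.s.\ we conclude $u^{k+1} \in \U$, closing the induction.

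I do not expect any genuine obstacle here; this lemma is a routine well-posedness check. The only point requiring a little care is chaining the regularity estimates in the right order — SDE estimate for $X^k$ first, then BSDE estimate for $(Y^k, Z^k)$ given $X^k$, then the growth bound on $\partial_u H$, then nonexpansivity of $\prox_U$ — and making sure the constants genuinely depend only on the model data (coefficients $A,B,C,D$, Lipschitz constants of the derivatives of $f^1,f^2,g$, and $\|\partial_u f(0,0)\|_{\LL^2}$, $\|\partial_x f(0,0)\|_{\HH^2}$) and not on $\mu$ or on $k$, consistent with the convention announced at the start of the appendix.
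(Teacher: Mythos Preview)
Your proposal is correct and follows essentially the same approach as the paper: induction on $k$, with membership in $U$ immediate from the range of $\prox_U$, and square-integrability obtained by first invoking the standard SDE/BSDE regularity (\citep[Theorems 3.2.2 and 4.2.1]{zhang2017backward}) for $(X^k,Y^k,Z^k)$, then bounding $\partial_u H$ via its Lipschitz structure and the square-integrability of $\partial_u f(0,0)$, and finally using the $1$-Lipschitz property of $\prox_U$ together with $|\prox_U(v)| \le |\prox_U(0)| + |v|$. The paper's proof is slightly terser but the chain of estimates is identical.
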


\begin{proof}
We prove by induction. Assume $u^k \in \U$ for some $k \in \N$, and let $(X^k, Y^k, Z^k)$ be the corresponding FBSDE solutions of \eqref{update_control}. Then $X^k, Y^k, Z^k \in \s^2\left(\R^n\right) \times \s^2\left(\R^n\right)\times \HH^2\left(\R^n\right)$ by \citep[Theorems 3.2.2 and 4.2.1]{zhang2017backward}.
By construction we have $u^{k+1}_t \in U$ for all $t \in [0,T]$, so it remains to show that $\left\|u^{k+1}\right\|_{\HH^2} < \infty$. As the function $\prox_{U}$ is 1-Lipschitz and, by definition of $H$ and the Lipschitz regularity of the cost function derivative, $\partial_u H$ is Lipschitz, hence
\begin{align*}
\left\|u^{k+1}\right\|_{\HH^2} & \leq \left\|\prox_{U}\left(u^k - \tau \partial_u H\left(X^k, u^k, Y^k, Z^k\right)\right) - \prox_{U}(0)\right\|_{\HH^2} + \|\prox_{U}(0)\|_{\HH^2} \\
& \leq \left\|u^k - \tau \partial_u H\left(X^k, u^k, Y^k, Z^k\right)\right\|_{\HH^2} + K \\
& \leq \left\|u^k\right\|_{\HH^2} + \tau \left\|\partial_u H\left(X^k, u^k, Y^k, Z^k\right) - \partial_u H\left(0, 0, 0, 0\right)\right\|_{\HH^2} + \tau \left\|\partial_u H\left(0, 0, 0, 0\right)\right\|_{\HH^2} + K \\
& \leq  K \left( \left\|Y^k\right\|_{\HH^2} + \left\|Z^k\right\|_{\HH^2} + \left\|X^k\right\|_{\HH^2} + \left\|u^k\right\|_{\HH^2} + 1 \right) + \tau \left\|\partial_u f\left(0, 0\right)\right\|_{\HH^2} \\
& < \infty
\end{align*}
as required, where $0$ denotes the zero process in the appropriate dimension, and $\partial_u H\left(X^k, u^k, Y^k, Z^k\right)$ denotes the process $\left(\partial_u H_t\left(X^k_t, u^k_t, Y^k_t, Z^k_t\right)\right)_{t \in [0,T]}$.
\end{proof}

Now we represent the solutions of the BSDE via linear adjoint operators. Following the approach of \citep[Section 2.2]{sun2016open}, we define a matrix valued process $\Phi \in \s^2(\R^{n \times n})$ 
satisfying 
\begin{align} \label{eq_def_phi}
d \Phi_s  = A_s \Phi_s ds + C_s \Phi_s dW_s, \ s \in [0,T],
\end{align}
with  $\Phi_0=I_n$, the identity matrix of dimension $n$.
Let $\mathbb{L}(E, F)$ denote the set of bounded linear maps between Hilbert spaces $E$ and $F$. 

For $t \in [0,T]$ define an operator $L_t \in \mathbb{L}\left(\HH^2([t, T]; \R^m), \HH^2([t, T]; \R^n)\right)$ by, for $s \in [t,T]$,
\begin{align}  
(L_t u)_s & = \Phi_s\int_t^s \Phi_r^{-1} \left(B_r - C_r D_r\right)u_r dr + \Phi_s \int_t^s \Phi_r^{-1}D_r u_r dW_r,  \label{eq_def_L} 
\end{align}
where here and in the sequel, we use $(L_t u)_s$ to denote the process $L_t u$ evaluated at time $s$. We furthermore define  $L_{t, T} \in \mathbb{L}(\HH^2([t, T]; \R^m), \LL^2(\R^n, \F_T))$ by
\begin{align} \label{eq_def_LT}
L_{t, T}u = (L_tu)_T.
\end{align}

Recall that for any linear operator $\Gamma \in \mathbb{L}(E, F)$ we can, with some help from the Reisz Representation Theorem, define the adjoint operator $\Gamma^* \in \mathbb{L}(F, E)$ such that 
$
\langle \Gamma x, y \rangle_F = \langle x, \Gamma^* y\rangle_E,
$ for all $x \in E$ and $y \in F$. 
For any $t \in [0,T]$ let $L^*_t$ and $L^*_{t, T}$ denote the adjoint operators of $L_t$ and $L_{t, T}$, given in (\ref{eq_def_L}) and (\ref{eq_def_LT}) respectively. As all model coefficients are bounded, these maps are all linear and bounded, with the same operator norm as their respective original operators, which we denote by $\|L_t\|$ and $\|L_{t, T}\|$ respectively. We use these adjoint operators to determine a representation for the BSDE solutions. This allows us to determine sufficient regularity estimates involving $Z^{k}_t$. 


\begin{lemma} \label{lem_rep} Let $(u^k)_{k \in \N}$ be the sequence generated by \eqref{update_control}, for any $u_0 \in \U$, with corresponding FBSDE solutions $(X^k, Y^k, Z^k)$. Then for all $k \in \N$, $t \in [0,T]$ we have
\[ \partial_u H_t\left(X^k_t, u^k_t, Y^k_t, Z^k_t\right)  = \left(L_{0, T}^* \triangledown g\left(X^k_T\right)\right)_t + \left(L_0^* \partial_x f\left(X^k, u^k\right)\right)_t + \partial_u f_t\left(X^k_t, u^k_t\right),\]
where $\partial_x f\left(X^k, u^k\right)$ denotes the process $\left(\partial_x f_s\left(X^k_s, u^k_s\right)\right)_{s \in [0,T]}$.
\end{lemma}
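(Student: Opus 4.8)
The plan is to identify the Fréchet derivative $\triangledown J(u^k)$, already known to equal $(\partial_u H_t(X^k_t,u^k_t,Y^k_t,Z^k_t))_{t\in[0,T]}$, with a more explicit expression obtained by differentiating $J$ directly through the linear state dynamics. First I would fix $k$ and write $X = X^k$, $u = u^k$, and for a perturbation $v \in \HH^2(\R^m)$ let $\Xi = \Xi^v$ denote the solution of the linearised state equation; since \eqref{eq_state} is linear in $(X,u)$, $\Xi$ solves the \emph{same} SDE with $x_0$ replaced by $0$ and $u$ replaced by $v$, i.e.\ $d\Xi_s = (A_s\Xi_s + B_s v_s)\,ds + (C_s\Xi_s + D_s v_s)\,dW_s$, $\Xi_0 = 0$. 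Using the fundamental matrix $\Phi$ from \eqref{eq_def_phi} and the variation-of-constants formula (exactly as in \citep[Section 2.2]{sun2016open}), one checks that $\Xi_s = (L_0 v)_s$ with $L_0$ as in \eqref{eq_def_L}; in particular $\Xi_T = L_{0,T}v$.

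Next I would compute the directional derivative of $J$ at $u$ in direction $v$. Since $f_t(x,u) = f^1_t(x,u) + f^2_t(u)$ and all the relevant derivatives are Lipschitz (hence $J$ is Fréchet differentiable with derivative in $\HH^2$, as already noted), dominated convergence gives
\begin{align*}
\langle \triangledown J(u), v\rangle_{\HH^2}
&= \E\!\left[\int_0^T \big( \partial_x f_s(X_s,u_s)^\top \Xi_s + \partial_u f_s(X_s,u_s)^\top v_s \big)\,ds + \triangledown g(X_T)^\top \Xi_T\right] \\
&= \big\langle \partial_x f(X,u),\, L_0 v\big\rangle_{\HH^2} + \big\langle \triangledown g(X_T),\, L_{0,T} v\big\rangle_{\LL^2} + \big\langle \partial_u f(X,u),\, v\big\rangle_{\HH^2}.
\end{align*}
Now apply the defining property of the adjoint operators: $\langle \partial_x f(X,u), L_0 v\rangle_{\HH^2} = \langle L_0^*\partial_x f(X,u), v\rangle_{\HH^2}$ and $\langle \triangledown g(X_T), L_{0,T}v\rangle_{\LL^2} = \langle L_{0,T}^*\triangledown g(X_T), v\rangle_{\HH^2}$. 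Collecting terms, $\langle \triangledown J(u) - L_{0,T}^*\triangledown g(X_T) - L_0^*\partial_x f(X,u) - \partial_u f(X,u),\, v\rangle_{\HH^2} = 0$ for all $v \in \HH^2(\R^m)$, whence the two $\HH^2$-processes coincide $dt\otimes d\p$-a.e. Combining this with the identity $(\triangledown J(u))_t = \partial_u H_t(X^k_t,u^k_t,Y^k_t,Z^k_t)$ yields the claimed representation.

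The main obstacle is making the Gâteaux-to-Fréchet differentiation of $J$ rigorous and justifying the interchange of limit, expectation and the time integral — this requires uniform $\s^2$/$\HH^2$ bounds on the perturbed state processes $X^{u+\varepsilon v}$ and their difference quotients, which follow from standard linear SDE estimates (e.g.\ \citep[Theorem 3.2.2]{zhang2017backward}) together with the Lipschitz bounds on $\partial_x f^1, \partial_u f^1, \partial_u f^2, \triangledown g$ and the square-integrability of $\partial_u f_t(0,0)$ assumed on the coefficients; the identification $\Xi = L_0 v$ is then a routine application of the variation-of-constants formula and Itô's product rule for $\Phi_s^{-1}\Xi_s$. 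A secondary point worth stating carefully is that the representation is as an equality of $\HH^2(\R^m)$-elements (i.e.\ $dt\otimes d\p$-a.e.), which is all that is needed downstream; if a pointwise-in-$t$ statement is wanted one invokes continuity of the relevant processes.
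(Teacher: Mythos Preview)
Your argument is correct, but the route differs from the paper's. The paper does not invoke the Fr\'echet derivative identity $(\triangledown J(u))_t = \partial_u H_t(X^k_t,u^k_t,Y^k_t,Z^k_t)$ as a black box; instead it works directly with the FBSDE pair. Concretely, it introduces a generic forward process $X^{x,u}$ and a generic backward pair $(Y^{\eta,\xi},Z^{\eta,\xi})$ with terminal datum $\eta$ and driver $\xi$, applies It\^o's formula to $(X^{x,u})^\top Y^{\eta,\xi}$, and after taking $x=0$ and rewriting via the adjoint definitions obtains $\langle u,\, B^\top Y^{\eta,\xi} + D^\top Z^{\eta,\xi} - L_{t,T}^*\eta - L_t^*\xi\rangle_{\HH^2}=0$ for all $u$, hence $B_s^\top Y^{\eta,\xi}_s + D_s^\top Z^{\eta,\xi}_s = (L_{t,T}^*\eta)_s + (L_t^*\xi)_s$. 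Specialising $\eta=\triangledown g(X^k_T)$, $\xi=\partial_x f(X^k,u^k)$ and adding $\partial_u f_t$ yields the lemma. Your approach instead computes $\triangledown J$ a second time by linearising the state (where you exploit that $X^{u+\varepsilon v}-X^u=\varepsilon L_0 v$ exactly, not just to first order) and then matches the two expressions for the gradient. The paper's argument is self-contained and isolates the purely linear-algebraic identity $B^\top Y + D^\top Z = L_{0,T}^*\eta + L_0^*\xi$, which is slightly more than you prove; your argument is shorter and conceptually clean once the Acciaio et al.\ result is granted, but note that the cited identity is itself typically established by the same It\^o integration-by-parts the paper carries out, so part of the work is hidden in the citation. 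Your closing remark that the equality is in $\HH^2$ (i.e.\ $dt\otimes d\p$-a.e.) is accurate and matches what the paper's proof actually delivers.
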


\begin{proof}
By (\ref{eq_def_H}), we have
$ \partial_u H_t\left(X^k_t, u^k_t, Y^k_t, Z^k_t\right)  = B_t^\top Y^k_t + D_t^\top Z^k_t + \partial_u f_t\left(X^k_t, u^k_t\right).$
To analyse the BSDE terms, we follow the approach of \citep[Section 2.2]{sun2016open}. Let $t \in [0,T]$, $x \in \R^n$, $u \in \HH^2([t, T]; \R^m)$, $\xi \in \HH^2([t, T]; \R^n)$ and $\eta \in \LL^2(\R^n, \F_T)$. Consider the FBSDE, 
\begin{align*}
dX^{x, u}_s & = \left(A_s X^{x, u}_s + B_s u_s \right) ds + \left( C_s X^{x, u}_s + D_s u_s \right) dW_s, \
X^{x, u}_t  = x, \\
dY^{\eta, \xi}_s & = -\left(A_s^\top Y^{\eta, \xi}_s + C_s^\top Z^{\eta, \xi}_s + \xi_s\right) ds + Z^{\eta, \xi}_s dW_s, \
Y^{\eta, \xi}_T  = \eta.
\end{align*}
By the variation of constants formula, $X^{x, u}$ admits the representation  for $s \in [t,T]$
\begin{align} \label{eq_rep}
X^{x, u}_s & = \Phi_s \Phi_t^{-1} x + (L_t u)_s, \qquad
X^{x, u}_T = \Phi_T \Phi_t^{-1} x + L_{t, T}u .
\end{align}
Applying It\^o's Lemma to $(X^{x, u})^\top Y^{\eta, \xi}$, taking the expectation, we have the Brownian motion term has expectation 0 as $X^{x, u} \in \s^2$ and $u, Z^{\eta, \xi} \in \HH^2$ \citep[Problem 2.10.7]{zhang2017backward}, and 
\begin{align*} 
 & \E\left[\left(\Phi_T \Phi_t^{-1} x + L_{t, T}u \right)^\top \eta - x^\top (Y^{\eta, \xi}_t)\right] \\
& = \E \left[ (X^{x, u}_T)^\top Y^{\eta, \xi}_T - (X^{x, u}_t)^\top Y^{\eta, \xi}_t\right] \\
& = \E \left[ \int_t^T \left(\left(A_s X^{x, u}_s + B_s u_s \right)^\top Y^{\eta, \xi}_s - (X^{x, u}_s)^\top\left(A_s^\top Y^{\eta, \xi}_s + C_s^\top Z^{\eta, \xi}_s + \xi_s\right) + \left(C_s X^{x, u}_s + D_s u_s\right)^\top Z^{\eta, \xi}_s \right) ds \right]\\
& = \E \left[ \int_t^T \left(u_s^\top\left(B_s^\top Y^{\eta, \xi}_s + D_s^\top Z^{\eta, \xi}_s\right) - \left((\Gamma_tx)_s + (L_t u)_s \right)^\top \xi_s\right) ds \right].
\end{align*}
Taking $x = 0$ yields
\begin{align} \label{eq_lin1} \begin{split}
 \E\left[\left(L_{t, T}u\right)^\top \eta \right]  = \E \left[ \int_t^T \left(u_s^\top\left(B_s^\top Y^{\eta, \xi}_s + D_s^\top Z^{\eta, \xi}_s\right)  - \left((L_t u)_s\right)^\top \xi_s\right) ds \right].
\end{split}
\end{align}
Furthermore, by definition of the adjoint operator, we have
\begin{align} \label{eq_lin2}
\E\left[(L_{t, T} u) ^\top \eta \right] = \E\left[\int_t^T u_s^\top (L^*_{t, T} \eta)_s ds \right], \\
\label{eq_lin3}
\E\left[\int_t^T (L_{t} u)_s^\top \xi_s ds \right] = \E\left[\int_t^T u_s^\top (L^*_{t} \xi)_s ds \right]. 
\end{align}
Substituting (\ref{eq_lin2}) and (\ref{eq_lin3}) into (\ref{eq_lin1}) yields
\begin{align} \label{eq_lin4} \begin{split}
& \E\left[\int_t^T u_s^\top (L^*_{t, T} \eta)_s ds \right] = \E \left[ \int_t^T \left(u_s^\top\left(B_s^\top Y^{\eta, \xi}_s + D_s^\top Z^{\eta, \xi}_s  - (L^*_{t} \xi)_s\right)\right) ds \right]. \end{split}
\end{align}
This yields an equation
 \[\left\langle u, B^\top Y^{\eta, \xi} + D^\top Z^{\eta, \xi}  - L_{t, T}^* \eta - L_t^* \xi \right\rangle_{\HH^2} = 0,\]
that is satisfied for all $u \in \HH^2(\R^m)$. The multiplier must therefore be 0 and we have for any $s \in [t, T]$
\begin{align*}
B_s^\top Y^{\eta, \xi}_s + D_s^\top Z^{\eta, \xi}_s & = (L_{t, T}^* \eta)_s + (L_t^* \xi)_s.
\end{align*}
Now we take $t = 0$, $x = x_0$, $u = u^k$, $\eta = \triangledown g(X^k_T)$, and $\xi = \partial_x f(X^k, u^k)$. Then $(X^{x, u}, Y^{\xi, \eta}, Z^{\xi, \eta} ) = (X^k, Y^k, Z^k)$ and the result follows.
\end{proof}

Now we bound the iterates of (\ref{update_control}) using this representation of the update formula. 

\begin{proof}[Proof of Theorem \ref{thm_diff}]
By Lemma \ref{lem_rep}, the update (\ref{update_control}) is given by
\begin{align} \begin{split} \label{update_control2}
u^{k+1}_t
& = \prox_{U}\left(u^k_t - \tau \left( \left[L_{0, T}^* \triangledown g\left(X^k_T\right)\right]_t + \left[L_0^* \partial_x f\left(X^k, u^k\right)\right]_t + \partial_u f_t\left(X^k_t, u^k_t\right)\right) \right).
\end{split}
\end{align}
Using the 1-Lipschitz property of $\prox_{U}$ we can bound the differences between iterates of (\ref{update_control2}) by
\begin{align*}
\left\|u^{k+1} - u^k\right\|_{\HH^2} & \leq \left\| u^k - u^{k-1} - \tau \left(I_1 + I_2 + I_3 + I_4
\right) \right\|_{\HH^2},
\end{align*}
where
\begin{align*}
I_1 & = L_{0, T}^* \left(\triangledown g\left( X^k_T\right) - \triangledown g\left(X^{k-1}_T\right)\right), \\
I_2 & = L_0^* \left(\partial_x f\left(X^k, u^k\right) - \partial_x f\left(X^{k-1}, u^{k - 1}\right)\right), \\
I_3 & =  \partial_u f^1\left(X^k, u^k\right) - \partial_u f^1\left(X^{k-1}, u^{k-1}\right), \\
I_4 & =  \partial_u f^2\left(u^k\right) - \partial_u f^2\left(u^{k-1}\right).
\end{align*}
We proceed with the analysis on a case-by cases basis under Assumption \ref{ass_strong}.

\textbf{Standard case.} We have
\begin{align*}
\left\|u^{k+1} - u^k\right\|_{\HH^2} & \leq \left\| u^k - u^{k-1} - \tau I_4
\right\|_{\HH^2} + \tau \left\|I_1 + I_2 + I_3 \right\|_{\HH^2} \\
& = \left(\left\| u^k - u^{k-1} \right\|^2_{\HH^2}  - 2\tau \left \langle u^k - u^{k-1}, I_4 \right \rangle_{\HH^2} + \tau^2 \left\|I_4\right\|^2_{\HH^2}
\right)^\half + \tau  \left\|I_1 + I_2 + I_3 \right\|_{\HH^2}.
\end{align*}
By \citep[Theorems 3.2.2 and 4.2.1]{zhang2017backward}, the Lipschitz property of the derivatives of the running and terminal cost, and the bounds on the linear operators, there exists $K \geq 0$, independent of $f^2$, such that
\begin{align*}
\left\|I_1 + I_2 + I_3\right\|_{\HH^2}\leq K  \left\| u^k - u^{k-1} \right\|_{\HH^2}.
\end{align*}
Since $f^2_t$ is $\mu$-strongly convex,  and $\partial_u f^2_t$ is Lipschitz continuous with Lipschitz constant $L_{f^2}$, both uniformly in $t$, then by 
\citep[Inequality (2.1.30)] {nesterov2003introductory}, we have $\mu\leq L_{f^2}$. 
Using this, and applying \citep[Theorem 2.1.12] {nesterov2003introductory} to $u \mapsto f^2_t(x, u)$, we bound
\begin{align*}
\left \langle u^k - u^{k-1}, I_4 \right \rangle_{\HH^2} 
&  \geq \frac{\mu L_{f^2}}{\mu + L_{f^2}} \left\|u^k - u^{k-1} \right\|_{\HH^2}^2 + \frac{1}{\mu + L_{f^2}}   \left\|\partial_u f^2\left( u^k\right) - \partial_u f^2\left(u^{k-1}\right) \right\|_{\HH^2}^2 \\
&  \geq \frac{\mu}{2} \left\|u^k - u^{k-1} \right\|_{\HH^2}^2 + \frac{1}{2L_{f^2}}   \left\|\partial_u f^2\left( u^k\right) - \partial_u f^2\left(u^{k-1}\right) \right\|_{\HH^2}^2.
\end{align*}
Hence, if $\tau<1/L_{f^2}$, then 
\begin{align*}
\left\|u^{k+1} - u^k\right\|_{\HH^2}
& \leq \left(\left(1 - \tau\mu\right) \left\| u^k - u^{k-1} \right\|^2_{\HH^2} + \tau \left(\tau - \frac{1}{L_{f^2}}  \right) \left\|I_4 \right\|_{\HH^2}^2
\right)^\half + K\tau  \left\| u^k - u^{k-1} \right\|_{\HH^2} \\
& \leq \left(1 - \tau \mu\right)^\half \left\| u^k - u^{k-1} \right\|_{\HH^2} + K\tau  \left\| u^k - u^{k-1} \right\|_{\HH^2} \\
& \leq \left(1 - \tau\left(\frac{\mu}{2} - K\right)\right) \left\| u^k - u^{k-1} \right\|_{\HH^2}
\end{align*}
as required.

\textbf{Singular case.} Similar to the previous case, we have the bounds
\begin{align*}
\left\|u^{k+1} - u^k\right\|_{\HH^2} & \leq \left\| u^k - u^{k-1} - \tau I_1
\right\|_{\HH^2} + \tau \left\|I_2 + I_3 + I_4 \right\|_{\HH^2} \\
& = \left(\left\| u^k - u^{k-1} \right\|^2_{\HH^2}  - 2\tau \left \langle u^k - u^{k-1}, I_1 \right \rangle_{\HH^2} + \tau^2 \left\|I_1\right\|^2_{\HH^2}
\right)^\half + \tau  \left\|I_2 + I_3 + I_4 \right\|_{\HH^2},
\end{align*}
and
\begin{align} \label{eq_singular_bound}
\left\|I_2 + I_3 + I_4\right\|_{\HH^2}\leq K  \left\| u^k - u^{k-1} \right\|_{\HH^2},
\end{align}
for some $K$ independent of $g$. Noting that $ X^k_T - X^{k-1}_T = L_{0, T}(u^k - u^{k-1})$ by \eqref{eq_rep}  and using the strong convexity of $g$ with \citep[Theorem 2.1.12] {nesterov2003introductory}, we have
\begin{align*}
 \left\langle u^k - u^{k-1}, I_1 \right\rangle_{\HH^2} 
& = \left\langle L_{0, T}\left(u^k - u^{k-1}\right), \triangledown g\left(X^k_T\right) - \triangledown g\left(X^{k-1}_T\right) \right\rangle_{\LL^2} \\
& = \bigg\langle X^k_T - X^{k-1}_T,\triangledown g\left(X^k_T\right) - \triangledown g\left(X^{k-1}_T\right) \bigg\rangle_{\LL^2} \\
& \geq \frac{\mu}{2} \left\|X^k_T - X^{k-1}_T \right\|_{\LL^2}^2 + \frac{1}{2 L_g}   \left\|\triangledown g\left(X^k_T\right) - \triangledown g\left(X^{k-1}_T\right)\right\|_{\LL^2}^2 \\
& = \frac{\mu}{2} \left\|L_{0, T}\left(u^k - u^{k-1}\right) \right\|_{\LL^2}^2 + \frac{1}{2 L_g}   \left\|\triangledown g\left(X^k_T\right) - \triangledown g\left(X^{k-1}_T\right)\right\|_{\LL^2}^2,
\end{align*}
where $L_g$ is the Lipschitz constant of $\triangledown g$. Hence, if $\tau \leq \frac{1}{ L_g \|L_{0, T}\|^2}$, then
\begin{align*}
& \left\| u^k - u^{k-1} \right\|_{\HH^2}^2 -2 \tau \left\langle u^k - u^{k-1}, I_1 \right\rangle_{\HH^2}  + \tau^2 \left\|I_1 \right\|_{\HH^2}^2 \\
&  \leq \left\| u^k - u^{k-1} \right\|_{\HH^2}^2 -\tau \mu\left\|L_{0, T}\left(u^k - u^{k-1}\right) \right\|_{\LL^2}^2  + \tau\left(  \tau\|L_{0, T}\|^2  - \frac{1}{L_g}  \right) \left\|\triangledown g\left(X^k_T\right) - \triangledown g\left(X^{k-1}_T\right)\right\|_{\LL^2}^2 \\
&  \leq \left\| u^k - u^{k-1} \right\|_{\HH^2}^2 - \tau \mu \left\|L_{0, T}\left(u^k - u^{k-1}\right) \right\|_{\LL^2}^2.
\end{align*}
 Combining with (\ref{eq_singular_bound}) gives 
\begin{align}
\left\| u^{k+1} - u^{k} \right\|_{\HH^2} & \leq 
 \left(\left\| u^k - u^{k-1} \right\|_{\HH^2}^2 - \tau \mu \left\|L_{0, T}\left(u^k - u^{k-1}\right) \right\|_{\LL^2}^2\right)^\half   + K \tau \left\| u^{k} - u^{k - 1} \right\|_{\HH^2}. \label{eq_new3}
\end{align}
We now need to handle the $L_{0, T}$ term.  We claim that there exists $\lambda > 0$ such that for any $u \in \HH^2(\R^m)$
\begin{align} \label{eq_sing}
\|L_{0, T} u\|^2_{\LL^2} \geq \lambda \|u\|^2_{\HH^2}.
\end{align}
If $\tau < \frac{1}{\mu \lambda}$, then taking $u = u^{k} - u^{k-1}$ and substituting into (\ref{eq_new3}) gives
\begin{align*}
\left\| u^{k+1} - u^{k} \right\|_{\HH^2}
&  \leq \left(\left(1 - \tau \mu\lambda\right)\left\| u^k - u^{k-1} \right\|_{\HH^2}^2  \right)^\half  + K \tau \left\| u^{k} - u^{k - 1} \right\|_{\HH^2} \\
&  \leq \left(1 - \tau\left(\frac{ \mu\lambda}{2} - K\right)\right)\left\| u^k - u^{k-1} \right\|_{\HH^2},   
\end{align*}
yielding the result. It remains to prove (\ref{eq_sing}). Let $u \in \HH^2(\R^m)$, then applying It\^o's Lemma to (\ref{eq_def_L}), the process $L_0 u \in \s^2(\R^n)$ satisfies
\begin{align*}
d\left|(L_0 u)_t\right|^2 & = \left(2(L_0u)_t^\top \left( A_t (L_0 u)_t + B_t u_t\right) + \left|C_t (L_0 u)_t + D_t u_t\right|^2 \right)dt + 2(L_0u)_t^\top\left(C_t (L_0 u)_t + D_t u_t\right) dW_t,
\end{align*}
with $\left|(L_0 u)_0\right|^2 = 0$. In particular, as $\|L_0u\|_{\s^2} < \infty$ and $\|CL_0u + Du\|_{\HH^2} < \infty$, the stochastic integral term of $\left|L_0 u\right|^2$ is a martingale so has expectation 0 \citep[Problem 2.10.7]{zhang2017backward}. Hence
\[\left\|L_{0, T}u \right\|_{\LL^2}^2 = \E\left[\left|(L_{0} u)_T\right|^2\right] = \E\left[ \int_0^T  \left( (L_0 u)_t^\top \A_t (L_0 u)_t + 2 (L_0 u)_t^\top \B_t u_t + u_t^\top \D_t u_t \right) dt \right],\]
where $\A, \B$ and $\D$ are defined in Assumption \ref{ass_strong}. We proceed by taking the cases outlined in this Assumption.

First, suppose that $\A_t$ is positive definite as in condition (i). Then $\A_t$ is invertible with symmetric inverse. For any $t \in [0,T]$, the function $x \mapsto x^\top \A_t x + x^\top \B_t u_t + u_t^\top \D_t u_t$ is minimised at $\hat{x} \defeq - \A_t^{-1} \B_t u_t$, with minimum value $u_t^\top \left(\D_t - \B_t^\top \A^{-1}_t \B_t\right) u_t$. Then assumption $\left(\D_t - \B_t^\top \A^{-1}_t \B_t\right) \succeq \delta \ind$ then implies
\begin{align*}
\left\|L_{0, T}u \right\|_{\LL^2}^2
&  \geq \E\left[ \int_0^T u_t^\top \left(\D_t - \B_t^\top \A^{-1}_t \B_t\right) u_t dt \right] \geq \delta \|u\|_{\HH^2}^2.
\end{align*}
Now, suppose $\A$ is  positive semi-definite. If $\B_t = 0$ for all $t$, we have  
\[\left\|L_{0, T}u \right\|_{\LL^2}^2 \geq \E\left[ \int_0^T  u_t^\top \D_t u_t \right] \geq \delta \|u\|^2_{\HH^2}.\]
If $\B_t \neq 0$ for some $t$, we have
\begin{align*}
\left\|L_{0, T}u \right\|_{\LL^2}^2 
& \geq \E\left[ \int_0^T  \left(  2 (L_0 u)_t^\top \B_t u_t + u_t^\top \D_t u_t \right) dt \right] \\
&  = \E\left[ \int_0^T  \left( \left|(L_0 u)_t + \B_t u_t\right|^2 -\left|(L_0 u)_t\right|^2 + u_t^\top\left( \D_t -\B_t^\top \B_t\right) u_t \right) dt \right] \\
&  \geq - T \left\|L_0 u\right\|^2_{\s^2} +  \E\left[ \int_0^T u_t^\top\left( \D_t -\B_t^\top \B_t\right) u_t  dt \right]. 
\end{align*}
By \citep[Theorem 3.2.2]{zhang2017backward} there exists $K$, depending on model parameters but independent of $T$, such that
$\|L_0u\|^2_{\s^2}   \leq K e^{K T}\|u\|^2_{\HH^2} $.
The assumption $\left(\D_t - \B_t^\top \B_t\right) \succeq \delta \ind$ then implies 
\begin{align} \label{eq_sing1}
\left\|L_{0, T}u \right\|_{\LL^2}^2 
& \geq  \left(\delta - K T e^{K T} \right) \|u\|^2_{\HH^2},
\end{align}
the coefficient on the right side remains positive for any $T$ satisfying $K T e^{K T} < \delta$.
\end{proof}

\begin{proof}[Proof of Theorem \ref{thm_convergence}]
Let $K_0$ be the constant from Theorem \ref{thm_diff}, that is independent of $\mu$, and take $\mu \geq \frac{1}{\rho}K_0$. Then define $\hat{c} \defeq 1 - \tau \left(\rho \mu - K_0\right)$. Taking $\tau$ sufficiently small as in Theorem \ref{thm_diff} gives $\hat{c} \in (0, 1)$ and $ \left\|u^{k+1} - u^k\right\|_{\HH^2} \leq \hat{c} \left\|u^k - u^{k-1}\right\|_{\HH^2}$ for all $k \in \N$. The sequence $\left(u^k\right)_{k\in\N}$ is then a Cauchy sequence in the Banach space $\HH^2(\R^m)$, so by Banach's fixed point Theorem, there exists $u^* \in \HH^2(\R^m)$ such that $\lim_{k \to \infty} \left\|u^k - u^*\right\|_{\HH^2} =  0$, with the convergence rate given by (\ref{eq_conv}). To show $u^* \in \U$, it remains to show that it takes values in $U$. This  follows from the proof that $u^*$ is a stationary point of $J$ under the control constraint.
Now, consider the FBSDE, for $s \in [0,T]$,
\begin{align*} 
dX^{*}_s & = \left(A_sX^{*}_s + B_s u^*_s\right) ds + \left(C_sX^{*}_s + D_s u^*_s\right) dW_s, \
X^{*}_0  = x_0, \\
dY^{*}_s & = -\left(A_s^\top Y^{*}_s + C_s^\top Z^{*}_s + \partial_x f_s\left(X^{*}_s, u^*_s\right)\right)ds + Z^{*}_s dW_s, \
Y^{*}_T = \triangledown g\left(X^{*}_T\right).
\end{align*}
By \citep[Theorems 3.2.2 and 4.2.1]{zhang2017backward}, we have $X^*, Y^*, Z^* \in \HH^2(\R^n)$ and furthermore there exists $K \geq 0$ such that for all  $k \in \N$,
\begin{align*}
\left\|X^{*} - X^{k}\right\|_{\HH^2} +
\left\|Y^{*} - Y^{k}\right\|_{\HH^2} +
 \left\|Z^{*} - Z^{k}\right\|_{\HH^2} & \leq K\left\|u^{*}
 - u^{k}\right\|_{\HH^2} \to 0,
 \end{align*}  
 where $X^k, Y^k, Z^k$ are given in (\ref{update_control}). In particular, using the Lipschitz properties of $\prox_{U}$ and $\partial_u H$, there exists $K \geq 0$ (that changes from line to line and is independent of $k$) such that for any $k \in \N$ 
\begin{align*}
& \left\|u^k - \prox_{U}\left(u^* - \tau \partial_u H\left({X}^*, u^*, {Y}^*, {Z}^*\right)\right)\right\|_{\HH^2}  \\
& = \left\|u^k - u^{k+1} +  \prox_{U}\left(u^k - \tau \partial_u H\left({X}^*, u^*, {Y}^*, {Z}^*\right)\right) - \prox_{U}\left(u^* - \tau \partial_u H\left({X}^*, u^*, {Y}^*, {Z}^*\right)\right)\right\|_{\HH^2} \\
& \leq K\left(\left\|u^k - u^{k+1}\right\|_{\HH^2} +  \left\|u^k - u^*\right\|_{\HH^2} + \left\|\partial_u H\left({X}^k, u^k, {Y}^k, {Z}^k\right) -  \partial_u H\left({X}^*, u^*, {Y}^*, {Z}^*\right)\right\|_{\HH^2} \right) \\
& \leq K\left(\left\|u^k - u^{k+1}\right\|_{\HH^2} +  \left\|u^k - u^*\right\|_{\HH^2} + \left\|X^k - {X}^*\right\|_{\HH^2}  + \left\|Y^k - {Y}^*\right\|_{\HH^2} + \left\|Z^k - {Z}^*\right\|_{\HH^2} \right) \\
& \to 0.
\end{align*}
Therefore, by uniqueness of limits we have
 \begin{align*}
  u^*  = \lim_{k \to \infty} u^k & = \prox_{U}\left(u^* - \tau \partial_u H\left({X}^*, u^*, {Y}^*, {Z}^*\right) \right) = \prox_{U}\left(u^* - \tau \triangledown J(u^*) \right),
 \end{align*}
 where the limit is taken in $\HH^2$. The control $u^*$ therefore satisfies $u \in \U$, proving the first statement. Furthermore, $u^*$ is a stationary point of $J$ under the control constraint by Theorem \ref{thm_stationary}, proving the second statement.
\end{proof}

\begin{proof}[Proof of Theorem \ref{thm_linear}] 
Fix $t \in [0,T]$ and $x \in \R^n$. Since the coefficients $B$, $D$, $R$,  $S$ in (\ref{eq_L_u}) are continuous in $t$, it suffices to show that $Y^{t,x,\phi}_t,\, Z_t^{t,x,\phi}$ are linear in $x$, and the linear coefficient is continuous in $t$. Substituting the definition of $\phi$ into (\ref{bsde}) gives us, for $s \in [t,T]$,
\begin{align*} 
dX^{t, x, \phi}_s & = \left(A_s + B_s \alpha_s \right)X^{t,x,\phi}_s  ds + \left(C_s + D_s \alpha_s\right)X^{t,x,\phi}_s  dW_s, \
X^{t,x,\phi}_t  = x, \\
dY^{t,x,\phi}_s & = -\left(A_s^\top Y^{t,x,\phi}_s + C_s^\top Z^{t,x,\phi}_s  + \left(Q_s + S_s^\top \alpha_s\right)X^{t,x,\phi}_s \right)ds + \left(Z^{t,x,\phi}_s\right)^\top dW_s, \
Y^{t,x,\phi}_T = GX^{t,x,\phi}_T.
\end{align*}
Due to the linear structure, we propose $v^{t, \phi}_s\left(\tilde{x}\right) = a_s \tilde{x} $ for $s \in [t,T]$ and $\tilde{x} \in \R^n$, for some $a\in \C^1\left([t,T];\R^{n \times n}\right)$. Substituting this into the HJB equation (\ref{eq_pde}), 
as $\tilde{x}$ is arbitrary, we have the ODE (\ref{update_a}).
Given this solution for $v^{\phi}$, we have the BSDE solutions
$Y_t^{t,x,\phi} = a_t x$ and $Z_t^{t,x,\phi} = a_t\left(C_t + D_t \alpha_t\right)x$, 
which are linear functions of $x$ with coefficients that are continuous in $t$.
\end{proof}

\bibliographystyle{apalike}

 \newcommand{\noop}[1]{}

\end{document}